\documentclass[letterpaper, 11pt]{amsart}
\usepackage[letterpaper, left=3cm,right=3cm, top=3cm, bottom=3cm]{geometry}
\usepackage{amsthm}
\usepackage{amsmath}
\usepackage{amssymb}
\usepackage{graphicx}
\usepackage{multicol}
\usepackage{bbm}
\usepackage[
colorlinks=true, citecolor=blue, linkcolor=blue, urlcolor=red]{hyperref}
\usepackage{color}
\usepackage[all]{xy}
\allowdisplaybreaks
\usepackage{mathrsfs}
\usepackage{tikz}
\usetikzlibrary{cd}
\usepackage[all]{xy}
\usepackage{here}

\makeatletter

\@addtoreset{equation}{section}
\makeatother

\makeatletter
\@namedef{subjclassname@2020}{%
  \textup{2020} Mathematics Subject Classification}
\makeatother

\newtheoremstyle{my theoremstyle}
{1.0em}                    
    {1.0em}                    
    {\itshape}                   
    {}                           
    {\scshape}                   
    {.}                          
    {.5em}                       
    {}  

\newtheoremstyle{dfn}
{1.0em}                    
    {1.0em}                    
    {}                   
    {}                           
    {\scshape}                   
    {.}                          
    {.5em}                       
    {}  
    
\theoremstyle{my theoremstyle}
   \newtheorem{thm}{Theorem}[section]
   \newtheorem{lem}[thm]{Lemma}
   \newtheorem{prop}[thm]{Proposition}

\theoremstyle{dfn}
   \newtheorem{dfn}[thm]{Definition}
   
\theoremstyle{remark}

\renewcommand{\labelenumi}{(\roman{enumi})}

\newcommand{\dsum}{\displaystyle \sum}

\newcommand{\C}{\mathbb{C}}
\newcommand{\Q}{\mathbb{Q}}
\newcommand{\Z}{\mathbb{Z}}

\newcommand{\R}{\mathbb{R}}

\renewcommand{\P}{\mathbb{P}}
\renewcommand{\a}{\alpha}

\renewcommand{\l}{\lambda}

\newcommand{\spec}{\operatorname{Spec}}

\newcommand{\rk}{\operatorname{rank}}

\newcommand{\ch}{\operatorname{CH}}

\newcommand{\Aut}{{\mathrm{Aut}}}

\numberwithin{equation}{section}

\date{\today}

\begin{document}
\title{Construction of higher Chow cycles on cyclic coverings of $\P^1 \times \P^1$, Part II}
\date{\today}
\author{Yusuke Nemoto}
\address{Graduate School of Science, Chiba University, 
Yayoicho 1-33, Inage, Chiba, 263-8522 Japan.}
\email{y-nemoto@waseda.jp}
\author{Ken Sato}
\address{Department of Mathematics, Institute of Science Tokyo, 2-12-1 Ookayama, Meguro-ku, Tokyo 152-8551, Japan.}
\email{sato.k.da@m.titech.ac.jp}

\keywords{Higher Chow cycle; regulator map; Hypergoemtric curve; Jordan-Pochhammer differential equation}
\subjclass[2020]{14C25, 19F27, 33C65}

\maketitle

\begin{abstract}
In this paper, we construct higher Chow cycles of type $(2, 1)$ on a family of surfaces related to a product of curves, which are certain degree $N$ abelian covers of $\mathbb{P}^1$  branched over $n+2$ points. 
We prove that for a very general member, these cycles generate a subgroup of the indecomposable part of $\operatorname{rank} \ge n\cdot \varphi(N)$, where $\varphi(N)$ is Euler's totient function, by computing their images under the transcendental regulator map.

\end{abstract}

\section{Introduction}
In our previous work \cite{NS}, we constructed higher Chow cycles of type $(2,1)$ on the minimal model of a quotient surface of a product of hypergeometric curves
\begin{equation*}
y^N = x^A(1-x)^A(1-\l_1 x)^A\quad \text{and } \quad y^N = x^{N-A}(1-x)^{N-A}(1-\l_2 x)^{N-A}.
\end{equation*}
Under some conditions, we described the images of the cycles under the Beilinson regulator map in terms of special functions satisfying inhomogeneous Gauss hypergeometric differential equations.
Using these descriptions, we proved that these cycles generate a subgroup of rank $36\cdot \varphi(N)$ of the indecomposable part of the higher Chow group for very general $\l_1,\l_2$, where $\varphi(N)$ is Euler's totient function (loc. cit. Theorem 7.5).

The aim of this paper is to prove similar results for surfaces constructed from hypergeometric curves of the more general form
\begin{equation}\label{genHGC}
y^N=(x-\l)^{a_0}(x-c_1)^{a_1} \cdots (x-c_n)^{a_n} \quad (\l \in \P^1 \setminus \{c_1, c_2,\ldots, c_n, \infty\})
\end{equation}
where $c_1,c_2,\dots,c_n\in \C$ are different from each other.
It is well-known (cf.~\cite{Archinard}) that the periods of the curves \eqref{genHGC} are described in terms of \textit{Appell-Lauricella hypergeometric functions} $F_D(c_1, \ldots, c_n,\l)$.
Since $c_1,\dots, c_n$ are fixed, $F_D(c_1, \ldots, c_n,\l)$, viewed as a function of 
$\l$, satisfies the \textit{Jordan-Pochhammer differential equation} (see Section \ref{JP}).

In this paper, we consider a family of surfaces $S$ over an open subset $T_0$ of $(\mathbb{P}^1\setminus\{c_1,\dots, c_n,\infty\})^2$, where the fiber $S_t$ over $t=(\l_1,\l_2)\in T_0$ is a smooth model of the quotient surface $((C_1)_{\l_1} \times (C_2)_{\l_2})/\mu_N$, and $(C_1)_{\l_1},(C_2)_{\l_2}$ are smooth models of hypergeometric curves given by
\begin{align*}
&y_1^N = (x_1-\l_1)^A(x_1-c_1)^A\cdots(x_1-c_n)^A  \quad\text{and} \\
&y_2^N = (x_2-\l_2)^{N-A}(x_2-c_1)^{N-A}\cdots(x_2-c_n)^{N-A}.
\end{align*}
Here, the group of $N$th roots of unity $\mu_N$ acts on $C_1 \times C_2$ by 
$$\zeta \cdot ((x_1, y_1), (x_2, y_2)) = ((x_1, \zeta^{-1}y_1), (x_2, \zeta y_2)), \quad \zeta \in \mu_N. $$
After taking a suitable \'etale base change $T\to T_0$, we can construct families of $(2,1)$-cycles $\xi_{c_1}^{(i)},\xi_{c_2}^{(i)},\dots, \xi_{c_n}^{(i)}\:\:(i\in \Z/N\Z)$ on $S_t$ by a method similar to that of \cite{NS}.
Under assumptions $\gcd(N, A)=1$ and $(N+1)/(n+1) \leq A \leq (nN-1)/(n+1)$, we prove the following theorem. 

\begin{thm}[Theorem \ref{mainthm}]\label{mainthmintro}
For a very general $t\in T$, the cycles $\xi_{c_1}^{(i)},\xi_{c_2}^{(i)},\dots, \xi_{c_n}^{(i)}$ generate a subgroup of rank at least $n\cdot \varphi(N)$ in $\ch^2(S_t,1)_{\mathrm{ind}}$.
In particular, we have
$$\rk \ch^2(S_t, 1)_{\rm ind} \ge n \cdot \varphi(N). $$
Here, $\ch^2(S_t, 1)_{\rm ind}$ denotes the indecomposable part of the higher Chow group $\ch^2(S_t, 1)$. 
\end{thm}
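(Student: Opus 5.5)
The plan is to follow the strategy of \cite{NS}, detecting indecomposability through the transcendental regulator map and a differential-equation argument in the parameter $t=(\l_1,\l_2)$. First I would fix the construction. Each point $c_j$ gives a fiber $x_1=c_j$ (resp.\ $x_2=c_j$) whose preimage in $S_t$ is a union of rational curves. Over the curves $\{x_1=c_j\}$ and $\{x_2=c_j\}$ I would place rational functions built from $y_1,y_2$ and $x_1-x_2$, such that the formal sum of the divisors cancels. This requires the \'etale base change $T\to T_0$ in order to choose the relevant $N$th roots. The $\mu_N$-action (via $y_1\mapsto\zeta^{-1}y_1$) then produces the twisted cycles $\xi_{c_j}^{(i)}$, $i\in\Z/N\Z$.

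Next I would compute the transcendental regulator
\[
r\colon \ch^2(S_t,1)\to H^{2,0}(S_t)^\vee/H_2(S_t,\Z),
\]
paired against holomorphic $2$-forms. The space $H^{2,0}(S_t)$ decomposes into $\mu_N$-eigenspaces spanned by products $\omega_1\wedge\omega_2$ of holomorphic forms on $C_1$ and $C_2$ with matching characters. The inequality $(N+1)/(n+1)\le A\le (nN-1)/(n+1)$, together with $\gcd(N,A)=1$, is what guarantees that for each primitive character the relevant eigenspaces contain the forms $dx/y$-type we need. Pairing a cycle supported on $x_1=c_j$ with $\omega_1\wedge\omega_2$ reduces, through the standard membrane argument, to the following: a period of $\omega_2$ along a path on $C_2$ from the point $x_2=c_j$, multiplied by an incomplete integral of $\omega_1$ on the fiber. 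The result is an Appell--Lauricella-type incomplete integral in $\l_1,\l_2$. Applying the Jordan--Pochhammer operator $\mathcal{D}_{\l_1}$ (see Section \ref{JP}) kills every genuine period, which is the $H_2(S_t,\Z)$ ambiguity. On the regulator of $\xi_{c_j}^{(i)}$ it should instead produce an explicit nonzero inhomogeneous term. I expect this term to be algebraic, something like a power of $(c_j-\l_1)$ times a period of the other curve.

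For the rank bound I would argue by contradiction, summing over the Galois conjugate characters; this sum is where the factor $\varphi(N)$ comes from. Suppose an integral relation $\sum m_{j,i}\xi_{c_j}^{(i)}$ holds modulo decomposables for very general $t$. Decomposable cycles have regulator equal to periods, that is, constant up to the lattice locally in $t$. Hence applying $\mathcal{D}_{\l_1}$ to the character-projected regulator gives $\sum_j \alpha_j(\l_1,\l_2)=0$, where the $\alpha_j$ are the inhomogeneous terms above. The terms $\alpha_j$ for distinct $c_j$ have distinct singular behaviour at $\l_1=c_j$, so they are linearly independent. Combining this with linear independence over characters then forces the coefficient vectors to vanish, giving rank $\ge n\varphi(N)$.

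The main obstacle will be the step that passes from "for each $t$ there is some relation" to "there is a locally constant relation with coefficients that we can differentiate". This needs a Baire-category/countability argument, which is the origin of "very general". The second delicate point is computing the inhomogeneous term correctly, including the boundary contributions at the branch points. I would first attempt that computation in local coordinates near $x_1=c_j$, using the differentiation-under-the-integral lemma of \cite{NS}.
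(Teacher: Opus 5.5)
\textbf{There is a genuine gap, and it is in the construction; everything after it inherits the problem.}

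The statement concerns the specific cycles of Definition \ref{cycledef}. Their essential component is the curve $Z$, the strict transform of the diagonal $\{x_1=x_2\}$. Your cycles instead live on the curves $F_{c_a}$ and $G_{c_b}$ lying over $x_1=c_a$ and $x_2=c_b$, together with the exceptional curves $Q_{(c_a,c_b)}$. So they sit entirely inside the total transform of the branch locus, and such cycles cannot work.

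Since $\sum\mathrm{div}=0$, the functions have zeros and poles only at nodes of this configuration. Up to $2$-chains on the curves themselves, where $\omega$ restricts to $0$, the associated $1$-cycle is therefore a sum of loops $F_{c_a}\to G_{c_b}\to F_{c_{a'}}\to G_{c_{b'}}\to F_{c_a}$, passing through the $Q$'s at the corners. Each such loop is bounded by a lift of $\gamma_x\times\gamma_y$, with $\gamma_x$ running from $c_a$ to $c_{a'}$ and $\gamma_y$ from $c_b$ to $c_{b'}$. Pairing with $\omega$ gives $\int_{\tilde\gamma_x}\omega_1\cdot\int_{\tilde\gamma_y}\omega_2$.

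Here $(1-\zeta^{-1})\int_{\tilde\gamma_x}\omega_1$ is a genuine period of $C_1$: compare $\tilde\gamma_x$ with its image under $y_1\mapsto\zeta y_1$, which fixes the totally ramified endpoints. The same holds for $C_2$. The space $\mathcal{P}(\omega)=\mathcal{P}(\omega_1)\mathcal{P}(\omega_2)$ is stable under multiplication by $\zeta$, because $\sigma^*\omega=\zeta^A\omega$ and $\gcd(A,N)=1$. Hence the regulator of every such cycle lies in $\mathcal{P}(\omega)$, and $\mathscr{D}_{\lambda_1}$ sends it to $0$: there is no inhomogeneous term to detect. Your predicted shape, ``a power of $(c_j-\lambda_1)$ times a period of the other curve'', is the signature of such product membranes. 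Here even the first factor vanishes, because an $\omega_1$-integral between two branch points is annihilated by $\mathscr{D}_{\lambda_1}$.

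\textbf{What you are missing is the diagonal.} On $V$ the curve $Z$ is given by $v^N(z-\lambda_1)=z-\lambda_2$, so it is rational with coordinate $v$. It meets $Q_{(c_j,c_j)}$ exactly in the $N$ points $v=\zeta^i\sqrt[N]{c_j-\lambda_2}/\sqrt[N]{c_j-\lambda_1}$; this is what the \'etale base change $T\to T_0$ is for. The functions $\psi^{(i)}_{c_j}$ and $\varphi^{(i)}_{c_j}$ are mutually inverse expressions in $v$, with zero and pole at two consecutive such points.

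For $\xi^{(i+l)}_{c_j}-\xi^{(i)}_{c_1}$, the membrane is $(K^l_j)^{(i)}-(K^l_j)^{(i+1)}$. These are lifts of the triangle $\{s_2\le s_1\}\subset\gamma\times\gamma$ for a path $\gamma$ from $c_1$ to $c_j$, and the hypotenuse runs along the diagonal. The computation then goes as follows:
\begin{enumerate}
\item Apply $\mathscr{D}_{\lambda_1}$ under the integral; Lemma \ref{potential} makes the integrand $\partial_x H$.
\item Stokes' theorem leaves only the hypotenuse.
\item The substitution $u^N=(x-\lambda_2)/(x-\lambda_1)$ turns this into $\int(u^N-1)^{n-2}u^{A-1}\,du$.
\end{enumerate}
The result is a purely algebraic function of $(\lambda_1,\lambda_2)$, built from $(c_j-\lambda_2)/(c_j-\lambda_1)$ and $(\lambda_2-\lambda_1)^{1-n}$, with no period factor. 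The individual cycles are then recovered from $\sum_l\xi^{(i+l)}_{c_j}=0$.

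Two smaller points. The factor $\varphi(N)$ needs no sum over Galois-conjugate characters. With the single form $\omega$, Lemma \ref{covering} and Theorem \ref{main:1} give images $\zeta^{Ai}F_j$ with the $F_j$ linearly independent over $\C$, and these span $\bigoplus_j\Z[\zeta]F_j$. The passage to very general $t$ is Lemma \ref{basiclem}, as you anticipated.
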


Unlike previous works \cite{NS} and \cite{Sato}, in this paper we do not consider automorphisms of this family, since $\mathrm{Aut}(\P^1\setminus \{c_1,\dots,c_n,\infty\})$ is trivial when $n\ge 3$ and $c_1,\dots, c_n$ are general.
As a result, in the case $n=2$, which is the case essentially considered in \cite{NS}, Theorem \ref{mainthmintro} yields only the weaker bound $\operatorname{rank} \ch^2(S_t,1)_{\mathrm{ind}}\geq 2\cdot\varphi(N)$ than the one obtained in the main theorem in \cite{NS}.

The outline of the proof is the same as in \cite{NS}.
We use the {\it transcendental regulator map}, which factors through $\ch^2(S_t, 1)_{\mathrm{ind}}$, hence it suffices to evaluate the rank of the subgroup generated by the images of our cycles under this map.
We give an integral representation of a normal function $\nu_{\mathrm{tr}}(\xi_{c_j}^{(i)})$ associated to the images of $(\xi_{c_j}^{(i)})_t$ under the transcendental regulator map.
Then we explicitly compute the pairing $\langle \nu_{\mathrm{tr}}(\xi_{c_j}^{(i)}), \omega\rangle$ with a canonical relative 2-form $\omega$ by using the assumptions $(N+1)/(n+1) \leq A \leq (nN-1)/(n+1)$.
We show that the pairing is given by a local holomorphic function $F_{i, j}$ satisfying the system of differential equations
\begin{equation*}
\left\{
\begin{aligned}
\mathscr{D}_{\l_1}(F_{i, j}) &= \frac{N(1-\zeta^A)\zeta^i}{(\l_2-\l_1)^{n-1}} \left(\dfrac{A}N\right)_{n-1}
\displaystyle  \sum_{k=0}^{n-2}  \dfrac{(2-n)_k}{(Nk+A)k!}\cdot 
\frac{(c_j-\l_2)^{\frac{Nk+A}{N}}}{(c_j-\l_1)^{\frac{Nk+A}{N}}}\\
\mathscr{D}_{\l_2}(F_{i, j}) &=-\frac{N(1-\zeta^{-A})\zeta^i}{(\l_2-\l_1)^{n-1}} \left(\dfrac{N-A}N\right)_{n-1}
\displaystyle  \sum_{k=0}^{n-2}  \dfrac{(2-n)_k}{(Nk+N-A)k!}\cdot 
\frac{(c_j-\l_1)^{\frac{Nk+N-A}{N}}}{(c_j-\l_2)^{\frac{Nk+N-A}{N}}}
\end{aligned}
\right.
\end{equation*}
where $\mathscr{D}_{\l_i}$ is the \textit{Jordan-Pochhammer differential operator} (see Section \ref{JP}) defined by
$$
\mathscr{D}_{\l_i}=q_0(\l_i) \dfrac{\partial^n}{\partial\l_i^n}+p_{1,i}(\l_i) \dfrac{\partial^{n-1}}{\partial\l_i^{n-1}}+\cdots + p_{n,i}(\l_i)$$
and $(\a)_k=\a(\a+1) \cdots (\a + k-1)$ denotes the Pochhammer symbol.
Since period functions of the family $S\to T$ with respect to $\omega$  are annihilated by $\mathscr{D}_{\l_1}, \mathscr{D}_{\l_2}$, the local holomorphic functions $F_{i, j}$ generate a subgroup of rank $n\cdot \varphi(N)$ in the sheaf of holomorphic functions modulo periods.
Then Theorem \ref{mainthmintro} follows.

\subsection{Outline of the paper}
In Section \ref{SurfaceFamily}, we define the family of surfaces $S\to T$ and fix some notations.
In Section \ref{ConstChow}, we explain the construction of families of higher Chow cycles $\xi_{c_j}^{(i)}$ for $i \in \Z/N\Z$ and $j=1, \ldots, n$. 
In Section \ref{computeReg}, we explain the construction of topological $2$-chains to compute the regulator map. 
In Section \ref{PFop}, we recall the Jordan-Pochhammer differential operator, and prove that the Jordan-Pochhammer differential operator is a Picard-Fuchs differential operator for $S \to T$.
In Section \ref{mainproof}, we compute the normal functions $\nu_{\mathrm{tr}}(\xi_{c_j}^{(i)})$ and prove Theorem \ref{mainthmintro}. 

\subsection{Notations}
Throughout this paper, we fix integers $n,N$ greater than 1 and pairwise distinct complex numbers $c_1,c_2,\dots, c_n$. 
Denote the group of $N$th roots of unity by  $\mu_N$.

\section{The family of surfaces} \label{SurfaceFamily}
Let $R_0=\P^1 \setminus \{c_1,c_2,\dots, c_n,\infty\}$ and $A_0=\C\left[\l, \frac1{\l-c_1},\frac{1}{\l-c_2},\dots, \frac{1}{\l-c_n} \right]$ be the coordinate ring of $R_0$. 
Let $f_i \colon C_i \to R_0$ $(i=1, 2)$ be a family of smooth projective curves, which is a smooth projective model of 
\begin{equation*}
\begin{aligned}
y_1^N&=(x_1-\l)^{a_0}(x_1 -c_1)^{a_1} \cdots (x_1-c_n)^{a_n} && \text{and}\\
y_2^N&=(x_2-\l)^{b_0}(x_2 -c_1)^{b_1} \cdots (x_2-c_n)^{b_n}
\end{aligned}
\end{equation*}
where $a_0,a_1,\dots,a_n \in \{1, \ldots, N-1\}$ and $\gcd(N, a_0, \ldots, a_n)=1$. 
We have the natural morphism $C_i \to \P^1 \times R_0$ defined by $(x_i, y_i) \mapsto x_i$.  

Let $T_0$ be the Zariski open subset of $R_0 \times R_0$ defined by 
$$T_0=\left\{ (\l_1, \l_2) \in R_0 \times R_0 \:\: \middle| \:\:  \l_1 \neq \l_2\right\}$$ 
and $B_0$ be the coordinate ring of $T_0$. 
We have the family of surfaces $C_1\times C_2\rightarrow R_0\times R_0$.
We use the same symbol $C_1\times C_2$ for its restriction to $T_0$.

Let $\mathscr{L}$ be the line bundle on $\P^1 \times \P^1 \times T_0$ defined by
\begin{equation*}
\mathscr{L} = pr_1^*\mathscr{O}_{\P^1}(-k_1)\otimes pr_2^*\mathscr{O}_{\P^1}(-k_2)
\end{equation*}
where $pr_i\colon \P^1\times \P^1\times T_0\to\P^1$ is the $i$th projection and $k_1$ (resp.~$k_2$) is a positive integers satisfying $\sum_{i=0}^{n}a_i\le k_1N$ (resp.~$\sum_{i=0}^{n}b_i\le k_2N$).
Let $x_0,x_1$ (resp.~$y_0,y_1$) denote homogeneous coordinates of the first (resp.~second) $\P^1$-factor of $\P^1\times \P^1\times T_0$.
Let $h$ be the global section of $\mathscr{L}^{\otimes(-N)}$ defined by 
\begin{equation*}
h= x_0^{a_{\infty}}y_0^{b_{\infty}}(x_1-\l_1x_0)^{a_0}(y_1-\l_2y_0)^{b_0} \prod_{j=1}^n (x_1-c_jx_0)^{a_j} (y_1-c_jy_0)^{b_j},
\end{equation*}
where $a_\infty=k_1N-\sum_{i=0}^{n}a_i$ and $b_\infty=k_2N-\sum_{i=0}^{n}b_i$.
We define $\overline{S}$ as the cyclic covering of degree $N$ associated with $(\mathscr{L},h)$, i.e., $\overline{S}$ is the relative spectrum of an $\mathscr{O}_{\P^1\times \P^1\times T_0}$-algebra $\bigoplus_{i=0}^{N-1}\mathscr{L}^{\otimes i}$.
Here, the ring structure is defined by 
\begin{equation*}
u\cdot v = \begin{cases}
u\otimes v & (i+j<N)\\
u\otimes v\otimes h & (i+j\geq N)
\end{cases}
\end{equation*}
where $u$ and $v$ are local sections of $\mathscr{L}^{\otimes i}$ and $\mathscr{L}^{\otimes j}$, respectively.

Let $x=x_1/x_0$ and $y=y_1/y_0$ denote inhomogeneous coordinates on $\P^1\times \P^1\times T_0$ and $U=\spec B_0[x, y] \subset \P^1 \times \P^1 \times T_0$ be the corresponding affine local chart.
An affine model of $\overline{S}$ is given by
\begin{equation}\label{Sbaraffine}
z^N= (x-\l_1)^{a_0}(y-\l_2)^{b_0} \prod_{j=1}^n (x-c_j)^{a_j} (y-c_j)^{b_j} 
\end{equation}
and let $S \to \overline{S}$ be a resolution of singularities such that $S_t\to \overline{S}_t$ is the minimal resolution of $\overline{S}_t$ for each $t\in T$.
The group $\mu_N$ acts on $C_1 \times C_2$ by 
$$((x_1, y_1), (x_2, y_2)) \mapsto ((x_1, \zeta^{-1} y_1), (x_2, \zeta y_2)), \quad \zeta \in \mu_N. $$
We define a morphism $C_1\times C_2 \to \overline{S}$ by
\begin{equation}\label{CtoS}
((x_1, y_1), (x_2, y_2)) \mapsto (x,y,z) = (x_1,x_2,y_1y_2).
\end{equation}
Then this morphism descends to a birational morphism from the quotient $C_1\times C_2/\mu_N$ to $\overline{S}$.
Let $\widetilde{C_1 \times C_2}/\mu_N \to C_1 \times C_2/\mu_N$ be a resolution of singularities and $\widetilde{C_1\times C_2}$ be the fiber product of $C_1\times C_2$ and $\widetilde{C_1 \times C_2}/\mu_N$ over $C_1\times C_2/\mu_N$.
Since $\widetilde{C_1 \times C_2}/\mu_N$ is birational to $\overline{S}$, we have the birational map $\widetilde{C_1 \times C_2}/\mu_N\to S$.
Furthermore, since $S_t\to \overline{S}_t$ is minimal for each $t\in T$, this birational map is a composition of blowing-ups.
Then, we have the following commutative diagram: 
\begin{equation}\label{diagram}
  \xymatrix{
     & \ar[ld]_{\eqref{CtoS}} C_1 \times C_2 \ar[d]^{N:1\ \text{cover}} &  \ar[l]_{\text{blow-up}}  \widetilde{C_1 \times C_2} \ar[d]^{N:1\ \text{cover}}   \\
    \overline{S} \ar[d]& \ar[l] C_1 \times C_2/\mu_N & \ar[l]^{\text{blow-up}}  \widetilde{C_1 \times C_2}/\mu_N \ar[d]^{\text{blow-up}} \\
    \P^1 \times \P^1 \times T_0 & &  \ar[llu]^{\text{blow-up}}  \ar[ll] S. 
  }
\end{equation}

In the rest of the paper, we suppose that $$
a_0=a_1= \cdots =a_n=A, \quad b_0=b_1= \cdots =b_n= N-A,  
$$ where $A$ is a positive integer satisfying the following conditions.
\begin{align}
&\gcd(N, A)=1, \\ 
&\dfrac{N+1}{n+1} \leq A \leq \dfrac{nN-1}{n+1}. \label{rangeassumption}
\end{align}
We define local charts on $S$. 
The inverse image of $U\subset \P^1\times \P^1\times T_0$ by $S \to \P^1 \times \P^1 \times T_0$ is covered by two open affine subschemes: 
\begin{align*}
V&=\spec B_0[x, y, v]/(v^Nf(x) -g(y)),  \\
W&=\spec B_0[x, y, w]/(f(x) - w^Ng(y)),  
\end{align*}
where $f(x)=(x-\l_1)(x-c_1) \cdots (x-c_n)$ and $g(y)=(y-\l_2)(y-c_1) \cdots (y-c_n)$.  
These two subsets are glued by the relation $v=1/w$. 
Then the map $V \cup W$ to the affine model \eqref{Sbaraffine} of $\overline{S}$ is given by 
\begin{align*}
& V \to \overline{S}; \quad (x, y, v) \mapsto (x, y, z)=(x, y, v^{N-A} (x-\l_1)(x-c_1) \cdots (x-c_n)),  \\ 
& W \to \overline{S}; \quad (x, y, w) \mapsto (x, y, z)=(x, y, w^A (y-\l_2) (y-c_1) \cdots (y-c_n)).   
\end{align*}
By these equations, we have only one exceptional curve on $S$ above $(x, y,z)=(c_i, c_j,0)\in \overline{S}$ ($i=1, \ldots, n$) and denote it by $Q_{(c_i, c_j)}$.

\section{Construction of higher Chow cycles} \label{ConstChow}
\subsection{Preliminaries}
For a smooth variety $X$ over $\C$, let $\mathrm{CH}^2(X,1)$ be the higher Chow group of type $(2,1)$ defined by Bloch \cite{Bloch}.
It is well-known (e.g., \cite[Corollary 5.3]{MS2}) that a higher Chow cycle in $\mathrm{CH}^2(X,1)$ is represented by a formal sum
\begin{equation}\label{formalsum}
\sum_j (C_j, f_j)
\end{equation}
where $C_j$ are prime divisors on $X$ and $f_j\in \C(C_j)^\times$ are non-zero rational functions on them such that $\sum_j {\mathrm{div}}_{C_j}(f_j) = 0$ as codimension 2 cycles on $X$.

A \textit{decomposable cycle} in $\mathrm{CH}^2(X,1)$ is an element of the image of the group homomorphism
\begin{equation}\label{intersectionproduct}
\mathrm{Pic}(X)\otimes \Gamma(X,\mathscr{O}_X^\times)=\mathrm{CH}^1(X)\otimes \mathrm{CH}^1(X,1) \longrightarrow \mathrm{CH}^2(X,1)
\end{equation}
induced by the intersection product. 
The cokernel of (\ref{intersectionproduct}) is denoted by $\mathrm{CH}^2(X,1)_{\mathrm{ind}}$, 
which is called the indecomposable part of $\mathrm{CH}^2(X,1)$. 
Let $C$ be a prime divisor on $X$ and $[C]\in \mathrm{Pic}(X)$ be the class corresponding to $C$. 
The image of $[C]\otimes \alpha$ under (\ref{intersectionproduct}) is represented by $(C,\alpha|_C)$ in the presentation (\ref{formalsum}). 

\subsection{Construction of families of higher Chow cycles}
Let 
$$B= B_0\left[\sqrt[N]{c_i-\l_1}, \sqrt[N]{c_i-\l_2}\:\middle|\: i=1,2,\dots,n\right]$$ 
and $T \to T_0$ be the finite \'etale cover corresponding to $B_0 \to B$. 
We use the same notation for the base changes of the schemes in the diagram \eqref{diagram} by $T \to T_0$. 
Let $D \subset \P^1 \times \P^1 \times T$ be the irreducible closed subscheme defined by the local equation $x=y$.
We define $Z\subset S$ as the strict transformation of the inverse image of $D$ by $\overline{S} \to \P^1 \times \P^1 \times T$. 
On the local chart $V$, $Z \hookrightarrow S$ is described by the following ring homomorphism 
$$B[x, y, v]/(v^Nf(x)-g(y)) \to B [z, v]/(v^N(z-\l_1)-(z-\l_2)); \quad (x, y, v) \mapsto (z, z, v). $$
The curves $Z$ and $Q_{(c_j, c_j)}$ intersect at $N$-points: 
$$Q_{(c_j, c_j)} \cap Z= \{(x, y, v)=(c_j, c_j,  \zeta^i \sqrt[N]{c_j-\l_2}/\sqrt[N]{c_j-\l_1}) \mid i \in \Z/N\Z, j=1, \ldots, n \}. $$ 

We define rational functions $\psi^{(i)}_{c_j} \in \C(Z)$ and $\varphi^{(i)}_{c_j} \in \C(Q_{(c_j, c_j)})$ $(i \in \Z/N\Z, j=1, \ldots, n)$ by the following equations: 
\begin{align*}
& \psi^{(i)}_{c_j}=\left(v-\zeta^{i+1} \dfrac{\sqrt[N]{c_j-\l_2}}{\sqrt[N]{c_j-\l_1}} \right) \cdot \left(v- \zeta^i \dfrac{\sqrt[N]{c_j-\l_2}}{\sqrt[N]{c_j-\l_1}} \right)^{-1},  \\
&\varphi^{(i)}_{c_j}=\left(v- \zeta^{i}\dfrac{\sqrt[N]{c_j-\l_2}}{\sqrt[N]{c_j-\l_1}} \right) \cdot \left(v-\zeta^{i+1} \dfrac{\sqrt[N]{c_j-\l_2}}{\sqrt[N]{c_j-\l_1}} \right)^{-1}.  
\end{align*}
\begin{dfn} \label{cycledef}
For $i \in \Z/N\Z$ and $j=1, \ldots, n$, we define higher Chow cycles $(\xi^{(i)}_{c_j})_t\in \ch^2(S_t,1)$ by 
\begin{align*}
&(\xi_{c_j}^{(i)})_t=(Z_t, \psi_{c_j}^{(i)}) +((Q_{(c_j, c_j)})_t, \varphi_{c_j}^{(i)}),  
\end{align*}
where $Z_t$ and $(Q_{(c_j, c_j)})_t$ denote the fibers of $Z$ and $Q_{(c_j, c_j)}$ over $t\in T$.
We denote the family of higher Chow cycles $\{(\xi^{(i)}_{c_j})_t\}_{t \in T}$ by $\xi^{(i)}_{c_j}$.

\end{dfn}

\section{Computation of the regulator} \label{computeReg}
In this section, we calculate the images of our cycles
under the transcendental regulator map, which is a variant of the Beilinson regulator map.

Recall that for a smooth projective variety $X$, the Beilinson regulator is the map from $\mathrm{CH}^2(X,1)$ to the Deligne cohomology $H^3_{\mathcal D}(X,\Q(2))$.
The \textit{transcenental regulator map} is the composition of the Beilinson regulator map and the natural projection induced by $H^{2,0}(X) \hookrightarrow F^1H^2(X,\C)$.
\begin{equation}\label{transreg}
r\colon \mathrm{CH}^2(X,1)\to H^3_{\mathcal D}(X,\Q(2))\simeq
\frac{F^1H^2(X,\C)^\vee}{H_2(X,\Q)} \twoheadrightarrow \frac{H^{2,0}(X)^\vee}{H_2(X,\Q)}
\end{equation}
Here, $H^{2,0}(X)^\vee$ means the dual $\C$-linear space of $H^{2,0}(X)$ and we regard $H_2(X,\Q)$ as a $\Q$-linear subspace of $F^1H^2(X,\C)^\vee$ by the integration.

\subsection{Levine's regulator formula}
To compute the image of a higher Chow cycle represented by a formal sum \eqref{formalsum} under the transcendental regulator map $r$, we use the following formula (cf. \cite{Levine}).
For simplicity, we assume $H^1(X,\Q)=0$.

Let $D_j\to C_j$ be the normalization of the curve $C_j$ on $X$. 
Let $\mu_j\colon D_j\to X$ denote the composition of $D_j\to C_j$ and the inclusion $C_j\to X$. 
We will define a topological 1-chain $\gamma_j$ on $D_j$.
If $f_j$ is a constant, we set $\gamma_j = 0$.
If $f_j$ is not a constant, we regard $f_j$ as a finite morphism from $D_j$ to $\P^1$. 
Then we define $\gamma_j = f_j^{-1}([\infty, 0])$ where $[\infty, 0]$ is a path on $\P^1$ from $\infty$ to $0$ along the positive real axis. 
By the condition $\sum_j \mathrm{div}_{C_j}(f_j) = 0$, $\gamma = \sum_j (\mu_j)_*\gamma_j$ satisfies $\partial \gamma=0$.
In this paper, $\gamma$ is called the \textit{1-cycle associated with $\xi$}.
Since $H_1(X, \Q) = 0$, there exists a 2-chain $\Gamma$ with $\Q$-coefficients such that $\partial\Gamma = \gamma$ as 1-cycles with $\Q$-coefficients.

Then, for a non-zero holomorphic 2-form $\omega\in H^{2,0}(X)$ on $X$, we can compute the image of $\xi =\sum_j(C_j,f_j)$ under the map \eqref{transreg} as 
\begin{equation}\label{transregval}
\langle r(\xi), \omega\rangle  \equiv \int_{\Gamma}\omega \mod \mathcal{P}(\omega), 
\end{equation}
where $\mathcal{P}(\omega)$ is the $\Q$-linear subspace of $\C$ defined by
\begin{equation}\label{Pdef}
\mathcal{P}(\omega) = \left\langle \displaystyle \int_{\Gamma}\omega  \:\: \middle| \:\:\Gamma \text{ is a topological 2-cycles on }X\right\rangle_\Q, 
\end{equation}
i.e., the $\Q$-linear subspace generated by periods with respect to $\omega$.
In particular, if $\xi$ is decomposable, then $r(\xi) = 0$. 
This implies the following.

\begin{prop}\label{transregprop}
The transcendental regulator map factors through $\mathrm{CH}^2(X,1)_{\mathrm{ind}}$.
\end{prop}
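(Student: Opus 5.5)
Since the transcendental regulator map $r$ of \eqref{transreg} is a group homomorphism out of $\mathrm{CH}^2(X,1)$, it suffices to show that $r$ kills the image of the intersection product \eqref{intersectionproduct}. Then $r$ descends to the cokernel $\mathrm{CH}^2(X,1)_{\mathrm{ind}}$. The image of \eqref{intersectionproduct} is generated by the classes $[C]\otimes\alpha$, where $C$ is a prime divisor and $\alpha\in\Gamma(X,\mathscr{O}_X^\times)$, because every class in $\mathrm{Pic}(X)$ is a $\Z$-linear combination of prime divisors and the product is bilinear. So we only need $r((C,\alpha|_C))=0$ for such a pair.

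The first step is to use that $X$ is smooth projective, so $\Gamma(X,\mathscr{O}_X^\times)=\C^\times$ and $\alpha|_C$ is a constant function. Following the recipe of Levine's formula preceding \eqref{transregval}, a constant $f_j$ is assigned the $1$-chain $\gamma_j=0$. The associated $1$-cycle is therefore $\gamma=0$, and we may take $\Gamma=0$, giving
\[
\langle r((C,\alpha|_C)),\omega\rangle\equiv\int_{0}\omega=0 \mod \mathcal{P}(\omega)
\]
for every $\omega\in H^{2,0}(X)$. Any other admissible choice of $\Gamma$ with $\partial\Gamma=0$ is a $2$-cycle, and its integral lies in $\mathcal{P}(\omega)$, so the class is independent of choices. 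Hence $r$ vanishes on decomposables.

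The one point needing care is that \eqref{transregval} is stated under the simplifying hypothesis $H^1(X,\Q)=0$. Without that hypothesis, I would argue directly from the Deligne cohomology description instead. The regulator of $(C,a)$ with $a\in\C^\times$ is the pushforward $\mu_*$ of the class of $\log a$ in $H^1_{\mathcal D}(D,\Q(1))\simeq\C/\Q(1)$, that is, a constant times the current of integration over $C$. Pairing it with a form in $F^1H^2$ gives $\log a\int_C\omega$. For $\omega\in H^{2,0}(X)$ this vanishes, because $\omega|_C=0$ for type reasons: a $(2,0)$-form restricts to zero on a curve. So the image of a decomposable cycle in $H^{2,0}(X)^\vee/H_2(X,\Q)$ is zero. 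This type argument is the only non-formal ingredient, and I would present it as the main step, the rest being bookkeeping about generators of the image of \eqref{intersectionproduct}.
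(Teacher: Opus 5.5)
Your proposal is correct and follows the paper's own argument. A decomposable cycle is $(C,\alpha|_C)$ with $\alpha$ constant because $X$ is projective, so the associated $1$-cycle in Levine's formula \eqref{transregval} is zero and one may take $\Gamma=0$; this is exactly the one-line justification the paper gives just before the proposition. Your extra Deligne-cohomology remark, that the image pairs with $\omega$ as $\log a\int_C\omega$ and this vanishes on $(2,0)$-forms by type, is also valid. It is a supplement the paper does not give, and it removes the dependence on the standing hypothesis $H^1(X,\Q)=0$.
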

\subsection{Construction of topological chains}\label{consttop}
In the rest of this section, we fix a point $t\in T$, i.e., we choose a point $(\l_1,\l_2)\in T_0$ and fix a branch of 
\begin{equation}\label{chosenbranch}
\sqrt[N]{c_j-\l_1},\quad\sqrt[N]{c_j-\l_2}\quad (j=1,2,\dots,n). 
\end{equation}
We will construct a topological 2-chain for computing the images of our cycles under the transcendental regulator map.
To use the formula \eqref{transregval}, we should prove the following.
\begin{prop}
The surface $S_t$ satisfies $H^1(S_t,\Q)=0$.
\end{prop}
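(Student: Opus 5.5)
We prove $H^1(S_t,\Q)=0$ by reducing it to a statement about $\mu_N$-invariants on the product of curves $(C_1)_{\l_1}\times(C_2)_{\l_2}$. Two standard facts do the reduction. First, $H^1(-,\Q)$ is a birational invariant of smooth projective surfaces, because a blow-up at a point does not change $H^1$. Second, for a finite group acting on a smooth variety, the rational cohomology of a resolution of the quotient equals the invariants. Concretely, by the diagram \eqref{diagram}, $S_t$ is obtained from $\widetilde{C_1\times C_2}/\mu_N$ by blow-ups and blow-downs, so
\[H^1(S_t,\Q)\cong H^1\bigl(\widetilde{C_1\times C_2}/\mu_N,\Q\bigr).\]

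The next step compares the resolved quotient with the singular one. The quotient $(C_1\times C_2)/\mu_N$ has only cyclic quotient singularities, which are rational. The exceptional curves of the resolution are trees of rational curves, so they contribute nothing to $H^1$. Hence
\[H^1\bigl(\widetilde{C_1\times C_2}/\mu_N,\Q\bigr)\cong H^1\bigl((C_1\times C_2)/\mu_N,\Q\bigr)\cong H^1(C_1\times C_2,\Q)^{\mu_N}.\]
The first isomorphism can also be seen with Hodge numbers: $h^{1,0}$ is a birational invariant, and for a normal quotient $h^{1,0}$ equals the dimension of the $\mu_N$-invariant holomorphic 1-forms on $C_1\times C_2$.

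By Künneth, $H^1(C_1\times C_2)=H^1(C_1)\oplus H^1(C_2)$, and $\zeta\in\mu_N$ acts on each factor only through $y_1\mapsto\zeta^{-1}y_1$ on $C_1$ and $y_2\mapsto\zeta y_2$ on $C_2$. So it suffices to show $H^1(C_i,\C)^{\mu_N}=0$ for each $i$. The $\mu_N$-invariant part of $H^1(C_i)$ is the pullback of $H^1$ of the quotient curve $C_i/\mu_N$. Here $C_i/\mu_N$ is the $x_i$-line $\P^1$, since $\mu_N$ acts exactly by the covering automorphisms of $x_i\colon C_i\to\P^1$. As $H^1(\P^1)=0$, the invariants vanish, and hence $H^1(S_t,\Q)=0$.

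The main obstacle is justifying the quotient and resolution comparisons rigorously, in particular that resolving the cyclic quotient singularities does not create $H^1$. I would handle this with the Leray spectral sequence, or with the decomposition theorem for the resolution map, together with the fact that the exceptional fibres are trees of $\P^1$'s and so are simply connected. Alternatively, one can argue via $q=h^{0,1}$: it is a birational invariant, rational singularities preserve $R^1\pi_*\mathscr{O}=0$, and $H^1(\mathscr{O})$ of the quotient equals $H^1(\mathscr{O}_{C_1\times C_2})^{\mu_N}=0$. Then $b_1=2q=0$.
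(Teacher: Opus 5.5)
Your proof is correct and follows the same skeleton as the paper's: birational invariance of $H^1$, reduction to $H^1((C_1\times C_2)_t,\Q)^{\mu_N}$, then K\"unneth. Two steps are justified differently, though.

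\textbf{Comparing the resolution with the quotient.} The paper works upstairs. It writes $H^1(S_t,\Q)=H^1(\widetilde{C_1\times C_2}/\mu_N,\Q)=H^1(\widetilde{C_1\times C_2},\Q)^{\mu_N}=H^1(C_1\times C_2,\Q)^{\mu_N}$. That is, it takes invariants on the fibre product $\widetilde{C_1\times C_2}$ and treats $\widetilde{C_1\times C_2}\to C_1\times C_2$ as a blow-up. You instead take invariants on the singular quotient $(C_1\times C_2)/\mu_N$. You then compare it with its resolution via the Leray spectral sequence, using that the exceptional fibres over the cyclic quotient singularities are trees of $\P^1$'s.

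Your version is somewhat more careful. The fibre product need not be a smooth blow-up of $C_1\times C_2$. The reason it still has the same $H^1$ is exactly your argument: its fibres over $C_1\times C_2$ are those same trees of rational curves. One small correction: your opening claim that the cohomology of a resolved quotient equals the invariants should be restricted to degree $1$. In degree $2$ the exceptional curves contribute extra classes. You only use $H^1$, and your second paragraph proves that case.

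\textbf{The final vanishing.} The paper cites Archinard's explicit description of the $\mu_N$-action on $H^1$ of the hypergeometric curve. You observe instead that $H^1(C_i,\Q)^{\mu_N}\cong H^1(C_i/\mu_N,\Q)=H^1(\P^1,\Q)=0$, because $\mu_N$ acts as the deck group of $x_i\colon C_i\to\P^1$. Your argument is more elementary and does not depend on the exponents. The citation gives finer information, namely which characters occur in $H^1$, but that is not needed for this proposition.
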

\begin{proof}
We have 
\begin{equation*}
H^1(S_t,\Q)=H^1((\widetilde{C_1\times C_2/\mu_N})_t,\Q)=H^1((\widetilde{C_1\times C_2})_t,\Q)^{\mu_N} = H^1((C_1\times C_2)_t,\Q)^{\mu_N}.
\end{equation*}
Here we use the fact that $(C_1\times C_2)_t\to (\widetilde{C_1\times C_2}/\mu_N)_t$ is the quotient map and $H^1$ is stable under blowing-ups.
Thus, it is enough to show $H_1((C_1\times C_2)_t,\Q)^{\mu_N}=0$.
By the K\"unneth decomposition, it is enough to check that the invariant part of $H^1((C_i)_{\l_i},\Q)$ under the action $(x_i,y_i)\mapsto (x_i,\zeta y_i)$ is 0, but this follows from the explicit description of its action on the cohomology, which is given in \cite[Theorem 6.2]{Archinard}.
\end{proof}

Let $j\in \{2,3,\dots,n\}$ and $l\in \Z/N\Z$.
To construct a topological 2-chain computing the image of $(\xi_{c_j}^{(i+l)})_t-(\xi_{c_1}^{(i)})_t$ under the transcendental regulator map, we fix a smooth path $\gamma_j^l\colon[0,1]\to \C$ (possibly having self-intersections) from $c_1$ to $c_j$ satisfying the following conditions.
\begin{enumerate}
\item\label{i} $\gamma_j^l$ does not pass through $\l_1,\l_2,c_1,\dots,c_n$ except the starting point and the ending point.
\item There exists $\varepsilon>0$ such that $\gamma_j^l(s) = c_1+s$ and $\gamma_j^l(1-s) = c_j - s$ hold for all $s<\varepsilon$.
\item\label{l1log} The total change of $\arg(z-\l_1)/N$ along $\gamma_j^l$ coincides with $\arg(\sqrt[N]{c_j-\l_1}/\sqrt[N]{c_1-\l_1})$ where $\sqrt[N]{c_1-\l_1}$ and $\sqrt[N]{c_j-\l_1}$ are chosen complex numbers in \eqref{chosenbranch}.
In other words, there exists an analytic continuation of the function $\sqrt[N]{z-\l_1}$ along $\gamma_j^l$ such that 
\begin{equation*}
\sqrt[N]{\gamma_j^l(0)-\l_1} = \sqrt[N]{c_1-\l_1}\quad \text{ and }\quad \sqrt[N]{\gamma_j^l(1)-\l_1}= \sqrt[N]{c_j-\l_1}.
\end{equation*}
\item\label{logl22} The total change of $\arg(z-\l_2)/N$ along $\gamma_j^l$ coincides with $\arg(\zeta^l\cdot\sqrt[N]{c_j-\l_2}/\sqrt[N]{c_1-\l_2})$ where $\sqrt[N]{c_1-\l_2}$ and $\sqrt[N]{c_j-\l_2}$ are chosen complex numbers in \eqref{chosenbranch}.
In other words, there exists an analytic continuation of the function $\sqrt[N]{z-\l_2}$ along $\gamma_j^l$ such that 
\begin{equation*}
\sqrt[N]{\gamma_j^l(0)-\l_2} = \sqrt[N]{c_1-\l_2}\quad \text{ and }\quad \sqrt[N]{\gamma_j^l(1)-\l_2}= \zeta^l\cdot \sqrt[N]{c_j-\l_2}.
\end{equation*}
\end{enumerate}
We fix the branch of functions $\sqrt[N]{z-\l_1}$ and $\sqrt[N]{z-\l_2}$ along $\gamma_j^l$ as in the condition \ref{l1log} and \ref{logl22}.
We also fix branches of functions $\sqrt[N]{z-c_k}$ along $\gamma_j^l$ for $k=1,2,\dots, n$.

Let $\triangle = \{(s_1,s_2)\in \R^2\mid 0< s_2\le s_1 < 1\}$.
For $i\in \Z/N\Z$, we define the map $\mu_j^l$ as
\begin{align*}
\mu_{j}^l\colon\triangle \to V (\subset S_t);(s_1, s_2) \mapsto (x, y, v)= \left(\gamma_j^l(s_1), \gamma_j^l(s_2), 
\dfrac{\sqrt[N]{g(\gamma_j^l(s_2))}}{\sqrt[N]{f(\gamma_j^l(s_1))}}\right)
\end{align*}
where the functions 
\begin{equation}\label{branchsqrt}
\begin{aligned}
\sqrt[N]{f(\gamma_j^l(s_1))} & = \sqrt[N]{\gamma_j^l(s_1)-\l_1}\cdot\sqrt[N]{\gamma_j^l(s_1)-c_1} \cdot \cdots \cdot \sqrt[N]{\gamma_j^l(s_1)-c_n}, \\
\sqrt[N]{g(\gamma_j^l(s_2))} &= \sqrt[N]{\gamma_j^l(s_2)-\l_2}\cdot\sqrt[N]{\gamma_j^l(s_2)-c_1} \cdot \cdots \cdot \sqrt[N]{\gamma_j^l(s_2)-c_n}
\end{aligned}
\end{equation}
are defined using the chosen branches of the functions $\sqrt[N]{z-c_k}, \sqrt[N]{z-\l_1}$ and $\sqrt[N]{z-\l_2}$.

For $i\in \Z/N\Z$, let $\sigma^i\in \Aut(S_t)$ be the automorphism defined by $(x,y,v)\mapsto (x,y,\zeta^iv)$.
Then we define the compact subset $(K_{j}^l)^{(i)}$ of $S_t$ as the closure of $(\sigma^i\circ\mu_{j}^l)(\triangle)$.
Since $(K_{j}^l)^{(i)}$ has natural orientation induced by $\triangle$, we regard $(K_{j}^l)^{(i)}$ as a 2-chain on $S_t$.
By considering the limits along the diagonal line on $\triangle$, we see that 
\begin{equation*}
\left(c_1,c_1,\zeta^i\dfrac{\sqrt[N]{c_1-\l_2}}{\sqrt[N]{c_1-\l_1}}\right),\left(c_j,c_j,\zeta^{i+l}\dfrac{\sqrt[N]{c_j-\l_2}}{\sqrt[N]{c_j-\l_1}}\right)\in (K^l_j)^{(i)}.
\end{equation*}
Then we have the following.

\begin{prop}\label{regulator}
For each $\eta\in H^{2,0}(S_t)$, we have
\begin{equation*}
\langle r((\xi_{c_j}^{(i+l)})_t - (\xi_{c_1}^{(i)})_t), \eta \rangle \equiv \int_{(K^l_j)^{(i)}}\eta-\int_{(K^l_j)^{(i+1)}}\eta  \mod{\mathcal{P}(\eta)}. 
\end{equation*}
\end{prop}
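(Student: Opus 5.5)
The plan is to apply Levine's formula \eqref{transregval} directly. By linearity, the 1-cycle $\gamma$ associated with $(\xi_{c_j}^{(i+l)})_t-(\xi_{c_1}^{(i)})_t$ is the difference of the 1-cycles associated with the two cycles. It therefore suffices to show that
\[
\partial\bigl((K^l_j)^{(i)}-(K^l_j)^{(i+1)}\bigr)=\gamma+\delta ,
\]
where $\delta$ is a 1-cycle with the following property: $\delta=\partial\Gamma'$ for a 2-chain $\Gamma'$ supported on $Z_t\cup (Q_{(c_1,c_1)})_t\cup (Q_{(c_j,c_j)})_t$. Since $\eta$ restricts to zero on any curve, $\int_{\Gamma'}\eta=0$. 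Hence $\Gamma=(K^l_j)^{(i)}-(K^l_j)^{(i+1)}-\Gamma'$ satisfies $\partial\Gamma=\gamma$ and $\int_\Gamma\eta$ equals the right-hand side of the statement.

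First I compute the boundary of $(K^l_j)^{(i)}$ edge by edge on the closure of $\triangle$.

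\emph{Diagonal $s_1=s_2$.} Here $x=y=\gamma_j^l(s)$. The factors $\sqrt[N]{\gamma_j^l(s)-c_k}$ in \eqref{branchsqrt} cancel because the same branches are used. Hence $v=\zeta^i\sqrt[N]{\gamma_j^l(s)-\l_2}/\sqrt[N]{\gamma_j^l(s)-\l_1}$, so this edge is a path on $Z_t$. By conditions \ref{l1log} and \ref{logl22}, it runs from the point $v=\zeta^i\sqrt[N]{c_1-\l_2}/\sqrt[N]{c_1-\l_1}$ of $Z_t\cap Q_{(c_1,c_1)}$ to the point $v=\zeta^{i+l}\sqrt[N]{c_j-\l_2}/\sqrt[N]{c_j-\l_1}$ of $Z_t\cap Q_{(c_j,c_j)}$.

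\emph{Edge $s_2=0$.} Here $y=c_1$ and $g(y)=0$, so $v=0$.

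\emph{Edge $s_1=1$.} Here $x=c_j$, so $v=\infty$, i.e.\ $w=0$.

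These last two edges lie on curves fixed pointwise by $\sigma$. Hence they cancel exactly in $(K^l_j)^{(i)}-(K^l_j)^{(i+1)}$.

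\emph{Corners $(0,0)$ and $(1,1)$.} The closure adds arcs in the exceptional curves. Near $(0,0)$, write $x-c_1=s_1$ and $y-c_1=s_2$ with $s_2/s_1=\rho\in[0,1]$. The coordinate $v$ then tends to a constant times $\zeta^i\rho^{1/N}$, which traces a segment in $Q_{(c_1,c_1)}$ from the $\sigma$-fixed point $v=0$ to the intersection point with $Z_t$. The corner $(1,1)$ is analogous, using $w$ on $Q_{(c_j,c_j)}$.

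I therefore obtain $\partial\bigl((K^l_j)^{(i)}-(K^l_j)^{(i+1)}\bigr)$ explicitly as two paths on $Z_t$ plus four arcs on the two exceptional curves. In $Q_{(c_1,c_1)}$, two of these arcs join at the common point $v=0$, and likewise in $Q_{(c_j,c_j)}$ at $w=0$.

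Next I compute $\gamma$. On $Z_t$ the curve is rational with coordinate $v$, and $\psi^{(i)}_{c_j}$ is a M\"obius function of $v$. So $(\psi^{(i)}_{c_j})^{-1}([\infty,0])$ is an explicit arc on $Z_t\cong\P^1$ joining the two points $\zeta^{i}r_j$ and $\zeta^{i+1}r_j$, where $r_j=\sqrt[N]{c_j-\l_2}/\sqrt[N]{c_j-\l_1}$. On $(Q_{(c_j,c_j)})_t$, the preimage under $\varphi^{(i)}_{c_j}$ is an arc joining the same two points in the opposite direction. Comparing with the boundary found above, the difference $\delta$ splits into pieces, each a 1-chain contained in a single component ($Z_t$, $Q_{(c_1,c_1)}$ or $Q_{(c_j,c_j)}$).

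The main obstacle is to check that each piece has zero boundary inside its own component, rather than only in the union. The union $Z_t\cup Q$ contains loops, since $Z_t$ meets each $Q$ in $N$ points, and a 1-cycle running around such a loop need not bound inside the curves. I would therefore match the endpoints one by one: the diagonal paths of $K^{(i)}$ and $K^{(i+1)}$ versus the $\psi$-arcs at $c_1$ and at $c_j$ (with the shift by $l$), and the corner arcs versus the $\varphi$-arcs. Once every piece is a cycle on a single smooth rational curve (for $Z_t$, its normalization $\P^1_v$), it bounds there because $H_1(\P^1)=0$. This yields $\Gamma'$, and the proposition follows from \eqref{transregval} together with $H^1(S_t,\Q)=0$.
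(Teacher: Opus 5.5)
Your strategy is the paper's: split $\partial\bigl((K^l_j)^{(i)}-(K^l_j)^{(i+1)}\bigr)$ into pieces lying on curves, compare with the 1-cycle attached to the Chow cycle, and absorb the discrepancy into 2-chains on rational curves, where $\eta$ restricts to zero. However, your boundary computation misses a piece.

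The triangle $\triangle$ has three corners, and you treat only $(0,0)$ and $(1,1)$. At $(s_1,s_2)=(1,0)$ one has $(x,y)\to(c_j,c_1)$. There $v^N=g(y)/f(x)$ is a nonzero constant times $s_2/(1-s_1)$, and this ratio takes every value in $(0,\infty)$ near that corner. So the closure of $(\sigma^i\circ\mu^l_j)(\triangle)$ contains an arc $\gamma_{1j}^{(i)}$ on the exceptional curve $(Q_{(c_j,c_1)})_t\cong\P^1_v$. This arc is a ray from $v=0$ to $v=\infty$, and its direction is multiplied by $\zeta$ when $i$ is replaced by $i+1$.

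Your own list shows the omission. The edge $s_2=0$ ends at the point $v=0$ of $Q_{(c_j,c_1)}$, while the edge $s_1=1$ starts at the different point $w=0$ of that curve. So the pieces you list for $\partial (K^l_j)^{(i)}$ do not close up. Since $\gamma_{1j}^{(i)}$ depends on $i$, it does not cancel in the difference. With $\Gamma'$ supported only on $Z_t\cup(Q_{(c_1,c_1)})_t\cup(Q_{(c_j,c_j)})_t$, your $\Gamma$ satisfies $\partial\Gamma=\gamma+\gamma_{1j}^{(i)}-\gamma_{1j}^{(i+1)}\neq\gamma$. Your displayed description of the boundary ("two paths on $Z_t$ plus four arcs") is therefore incomplete.

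The repair is the one the paper makes. The two rays share the $\sigma$-fixed endpoints $v=0$ and $v=\infty$. Hence $\gamma_{1j}^{(i+1)}-\gamma_{1j}^{(i)}$ is a closed loop on $(Q_{(c_j,c_1)})_t\cong\P^1$, and it equals $\partial\Gamma_{1j}^{(i)}$ for a 2-chain on that curve, over which $\eta$ integrates to $0$. Once $\Gamma'$ is allowed a component on $(Q_{(c_j,c_1)})_t$, the rest of your argument is the paper's proof:
\begin{itemize}
\item the edges on $\{v=0\}$ and $\{w=0\}$ cancel;
\item endpoints are matched componentwise on $Z_t$, $(Q_{(c_1,c_1)})_t$ and $(Q_{(c_j,c_j)})_t$;
\item $H_1(\P^1)=0$ supplies the bounding chains.
\end{itemize}
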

\begin{proof}
\begin{figure}[h]
\centering
\includegraphics[width=12cm]{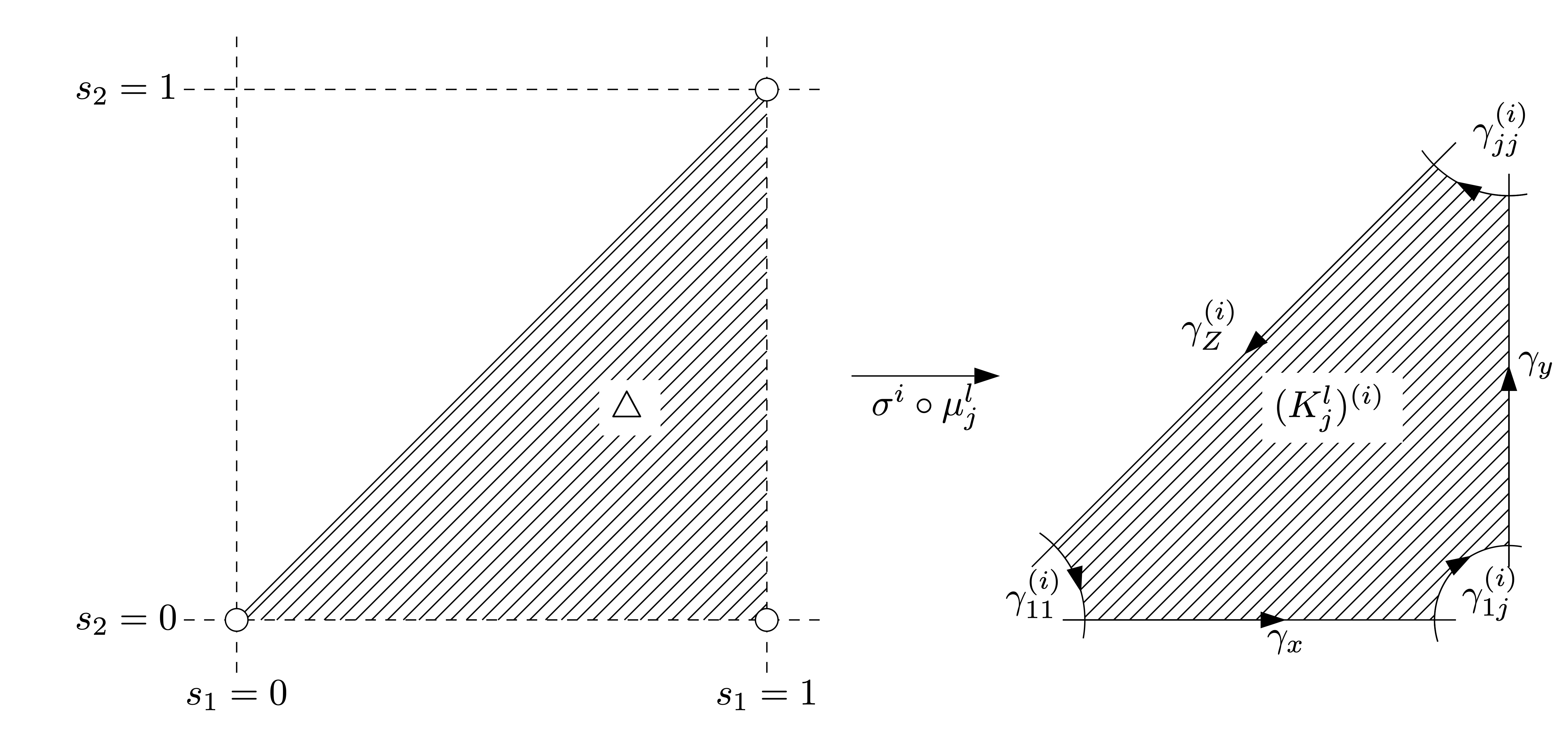}
\caption{The topological 2-chain $(K_j^l)^{(i)}$}
\label{Kfigure}
\end{figure}

The boundary of $(K_{j}^l)^{(i)}$ consists of the following paths.
See Figure \ref{Kfigure}.
\begin{enumerate}
\renewcommand{\labelenumi}{(\alph{enumi})}
\item The path $\gamma_{x}$ (resp. $\gamma_{y}$) is the intersection of $(K_j^l)^{(i)}$ and the curve defined by $y=c_1$ and $v=0$ (resp. $x=c_j$ and $w=0$).
Since $\gamma_{x}$ and $\gamma_{y}$ are contained in the branching locus of $S_t\to \P^1\times \P^1$, they does not depend on $i\in \Z/N\Z$.
\item The path $\gamma_{Z}^{(i)}$ is the closure of the image of diagonal line on $\triangle$ under $\sigma^i\circ\mu_j^l$.
It is contained in the curve $Z_t$ and a path from $(x,y,v) = \left(c_j,c_j,\zeta^{i+l}\dfrac{\sqrt[N]{c_j-\lambda_2}}{\sqrt[N]{c_j-\lambda_1}}\right)$ to $(x,y,v) = \left(c_1,c_1,\zeta^{i}\dfrac{\sqrt[N]{c_1-\lambda_2}}{\sqrt[N]{c_1-\lambda_1}}\right)$.
\item The path $\gamma_{11}^{(i)}$ (resp.$\gamma_{jj}^{(i)}, \gamma_{1j}^{(i)}$) is the intersection of $(K_j^l)^{(i)}$ and the exceptional curve $(Q_{(c_1,c_1)})_t$ (resp.~$(Q_{(c_j,c_j)})_t, (Q_{(c_j,c_1)})_t$).
\end{enumerate}
Since the starting point and the ending point of $\gamma_{1j}^{(i)}$ are the same for all $i \in \Z/N\Z$, we have $\partial(\gamma_{1j}^{(i+1)}-\gamma_{1j}^{(i)})=0$.
The exceptional curve $(Q_{(c_j,c_1)})_t$ is isomorphic to $\P^1$ and $H_1(\P^1,\Z)=0$, so there exists a 2-chain $\Gamma_{1j}^{(i)}$ on $(Q_{(c_j,c_1)})_t$ such that $\partial \Gamma_{1j}^{(i)} = \gamma_{1j}^{(i+1)}-\gamma_{1j}^{(i)}$.

Let $\widetilde{\gamma}_{Z}^{(i)}$ (resp.~$\widetilde{\gamma}^{(i)}_{11},\widetilde{\gamma}^{(i+l)}_{jj}$) be the 1-chain on $Z_t$ (resp. $(Q_{(c_1,c_1)})_t, (Q_{(c_j,c_j)})_t$) which is the pull-back of $[\infty, 0]$ on $\P^1$ by the rational function $\psi_{c_j}^{(i+l)}\cdot (\psi_{c_1}^{(i)})^{-1} \in \C(Z_t)^\times$ (resp. $\left(\varphi_{c_1}^{(i)}\right)^{-1}\in \C((Q_{(c_1,c_1)})_t)^\times, \varphi_{c_j}^{(i+l)}\in \C((Q_{(c_j,c_j)})_t)^\times$).
Then $\widetilde{\gamma}_{Z}^{(i)}+\widetilde{\gamma}^{(i)}_{11}+\widetilde{\gamma}^{(i+l)}_{jj}$ is the 1-cycle associated with $(\xi_{c_j}^{(i+l)})_t - (\xi_{c_1}^{(i)})_t$.
Considering their starting points and ending points, we have
\begin{equation*}
\partial(\gamma_{Z}^{(i+1)} + \widetilde{\gamma}_{Z}^{(i)} -\gamma_{Z}^{(i)}) = 0,\:
\partial (\gamma_{11}^{(i+1)}+ \widetilde{\gamma}_{11}^{(i)}  -\gamma_{11}^{(i)}) = 0\text{ and }
\partial (\gamma^{(i+1)}_{jj}+\widetilde{\gamma}^{(i+l)}_{jj} -  \gamma_{jj}^{(i)}) = 0.
\end{equation*}
Since $Z_t,(Q_{(0,0)})_t$, and $(Q_{(1,1)})_t$ are isomorphic to $\P^1$, we have 2-chains $\Gamma_{Z}^{(i)},\Gamma_{11}^{(i)}$, and $\Gamma_{jj}^{(i)}$ on curves $Z_t,(Q_{(c_1,c_1)})_t$, and $(Q_{(c_j,c_j)})_t$, respectively,  such that
\begin{equation*}
\partial\Gamma_{Z}^{(i)} =\gamma_{Z}^{(i+1)} + \widetilde{\gamma}_{Z}^{(i)} -\gamma_{Z}^{(i)},\:
\partial\Gamma_{11}^{(i)} =\gamma_{11}^{(i+1)}+ \widetilde{\gamma}_{11}^{(i)}  -\widetilde{\gamma}_{11}^{(i)}\text{ and }
\partial\Gamma_{jj}^{(i)} = \gamma^{(i+1)}_{jj}+\widetilde{\gamma}^{(i+l)}_{jj} -  \gamma_{jj}^{(i)}.
\end{equation*}
By above relations, we have
\begin{equation*} 
\partial ((K_{j}^l)^{(i)}-(K_j^l)^{(i+1)} + \Gamma_{Z}^{(i)} + \Gamma_{11}^{(i)}+\Gamma_{1j}^{(i)}+\Gamma_{jj}^{(i)}) =  \widetilde{\gamma}^{(i)}_{Z}+\widetilde{\gamma}^{(i)}_{11}+\widetilde{\gamma}^{(i+l)}_{jj}.
\end{equation*}
Thus, by the formula \eqref{transregval}, we have 
\begin{equation}\label{integral}
\langle r((\xi_{c_j}^{(i+l)})_t - (\xi_{c_1}^{(i)})), \eta\rangle = \int_{(K_{j}^l)^{(i)}}\eta  - \int_{(K_j^l)^{(i+1)}}\eta + \int_{\Gamma_{Z}^{(i)}}\eta + \int_{\Gamma_{11}^{(i)}}\eta + \int_{\Gamma_{1j}^{(i)}}\eta +\int_{\Gamma_{jj}^{(i)}}\eta
\end{equation}
for each $\eta\in H^{2,0}(S_t)$.
Since $\eta$ is a holomorphic 2-form, its restrictions on $(Q_{(c_1,c_1)})_t$, $(Q_{(c_j,c_j)})_t$, $(Q_{(c_j,c_1)})_t$ and $Z_t$ vanishes.
Hence, the terms after the third term in equation (\ref{integral}) are zero.
Thus, the proposition follows.
\end{proof}

\section{Picard-Fuchs differential operator}\label{PFop}
In this section, we fix a relative 2-form $\omega$ on $S\to T$ and find a \textit{Picard-Fuchs differential operator} $\mathscr{D}$, i.e., a differential operator which annihilates the period functions with respect to $\omega$.
As we stated in the introduction, the Jordan-Pochhammer differential operator becomes a Picard-Fuchs differential operator.

\subsection{A sheaf of period functions}
Let $T$ be a smooth variety and $\pi\colon X\to T$ be a smooth projective family of algebraic varieties of relative dimension $d$.
For $p\ge 1$, we fix a relative algebraic $p$-form $\eta\in \Gamma(X,\Omega_{X/T}^p)$. 
For $t\in T$, $\eta_t\in H^{0}(X_t,\Omega^{p}_{X_t})= H^{p,0}(X_t)$ denotes its restriction to the fiber $X_t$.

Hereafter, we regard $X$ and $T$ as complex manifolds with the classical topologies.
Let $\mathscr{O}_T^{\mathrm{an}}$ be the sheaf of holomorphic functions on $T$, and $\Q(d)_{X}$ be the constant sheaf on $X$ with value $\Q(d) = (2\pi i)^{-d}\Q$.
Then $R^{2d-p}\pi_*\Q(d)_X$ is a local system of finite dimensional $\Q$-vector spaces, and its local section on $U\subset T$ corresponds to a family of cohomology classes in $H^{2d-p}(X_t,\Q(d))$ for $t\in U$.
By the Poincar\'e duality $H^{2d-p}(X_t,\Q(d))\simeq H_p(X_t,\Q)$, a local section of $R^{2d-p}\pi_*\Q(d)_X$ can be described by a family of $p$-cycles $\{\Gamma_t\}_{t\in U}$ with $\Q$-coefficients.
Then we can define a morphism of sheaves
\begin{equation*}
R^{2d-p}\pi_*\Q(d)_X \to \mathscr{O}_T^{\mathrm{an}}; \quad\{\Gamma_t\}_{t\in U}\mapsto \left(t\mapsto \int_{\Gamma_t}\eta_t\right).
\end{equation*}
We denote the image sheaf of this map by $\mathcal{P}(\eta)$, which consists of period functions with respect to $\eta$.

For $t\in T$ and an open neighborhood $U$ of $t$, we have a natural evaluation map $\Gamma(U,\mathscr{O}_T^{\mathrm{an}})\rightarrow \C;f\mapsto f(t)$.
This induces the $\Q$-linear map $\Gamma(U,\mathcal{P}(\eta))\to \mathcal{P}(\eta_t)$,
where $\mathcal{P}(\eta_t)$ is a $\Q$-linear subspace defined in Section \ref{Pdef}.
Thus, we have the evaluation map
\begin{equation}\label{Qeval}
\Gamma(U,\mathscr{O}_{T}^{\mathrm{an}}/\mathcal{P}(\eta))\to \C/\mathcal{P}(\eta_t)
\end{equation}
for any open neighborhood $U$ of $t$.

\subsection{A relative 2-form $\omega$ on $S$}\label{omegasection}
Let $\omega_1$ (resp. $\omega_2$) be the relative $1$-form on $C_1$ (resp. $C_2$) over $T$ defined by 
\begin{align}\label{omega}
&\omega_1=\dfrac{dx_1}{y_1} \quad \left(\text{resp. } \omega_2=\dfrac{dx_2}{y_2}\right).
\end{align}
These are indeed regular $1$-forms on $C_1$ and $C_2$ over $T$ by the assumption \eqref{rangeassumption} (see \cite[Section 6.1]{{Archinard}}).  
We have $T$-morphisms 
$$C_1\times C_2 \xleftarrow{\text{blowing-up}} \widetilde{C_1\times C_2} \xrightarrow{\text{generically }N:1} S. $$
Let $pr_i \colon C_1 \times C_2 \to C_i$ be the $i$th projection.  
We have the relative $2$-form $pr_1^* (\omega_{1}) \wedge pr_2^*(\omega_{2})$ on $C_1\times C_2 \to T$. 
Then its pull-back to $\widetilde{C_1\times C_2}$ is stable under the covering transformation of $\widetilde{C_1\times C_2}\to S$, hence it descends to $S$.  
We denote the resulting relative $2$-form on $S$ by $\omega$, which is described as 
\begin{equation}\label{omegalocal}
\omega=\frac{dx \wedge dy}{v^{N-A} f(x)}=\frac{dx \wedge dy}{w^Ag(y)}
\end{equation}
on the local charts $V$ and $W$, respectively. 

\subsection{The differential operator $\mathscr{D}$} \label{JP}
We will find a differential operator $\mathscr{D}\colon\mathscr{O}_T^{\mathrm{an}}\to (\mathscr{O}_T^{\mathrm{an}})^{\oplus 2}$ which annihilates all sections of $\mathcal{P}(\omega)$.
The periods of our family of surface $S$ correspond to the direct product $C_1\times C_2$ of families of hypergeometric curves.
Since we fix the parameter $c_1,c_2,\dots,c_n$, the Jordan-Pochhammer differential operator is the Picard-Fuchs differential operator for hypergeometric curves $C_i\to R$.

For complex parameters $r_0,r_1,\dots,r_n\in \C$, the \textit{Jordan-Pochhammer differential operator} $L_\l$ in the variable $\l\in R_0=\P^1 \setminus \{c_1,\dots,c_n,\infty\}$ is defined by (cf.~\cite[18.4]{Ince})
$$L_\l=q_0(\l) \dfrac{d^n}{d\l^n}+p_1(\l) \dfrac{d^{n-1}}{d\l^{n-1}}+\cdots + p_{n-1}(\l) \dfrac{d}{d\l}+p_n(\l). $$
Here, the polynomials $q_0(\l)$ and $p_k(\l)\:(k=1,2,\dots,n)$ are defined by 
\begin{align}
\begin{split} \label{Jordan-Pochhammer}
&q_0(\l)=\prod_{j=1}^n(\l-c_j), \quad  q_1(\l)= q_0(\l)  \sum_{j=1}^n \dfrac{r_j}{\l-c_j}, \\
& p_k(\l)=
\begin{pmatrix}
n+r_0-2 \\
k
\end{pmatrix}
q_0^{(k)}(\l)+ 
\begin{pmatrix}
n + r_0 -2 \\
k-1
\end{pmatrix}
q_1^{(k-1)}(\l) \quad (k=1, \ldots, n).  
\end{split}
\end{align}
where $\begin{pmatrix}
\alpha \\
k
\end{pmatrix} = \dfrac{\alpha(\alpha-1)\cdots (\alpha-k+1)}{k!}$ denotes the binomial coefficients for $\alpha\in\C$ and $q^{(k)}(\lambda)= \dfrac{d^kq}{d\lambda^k}(\lambda)$.

The following lemma is useful not only for computing periods of $C_i\to R_0$ but also computing differentials of normal functions.

\begin{lem} \label{potential}
Let $H(\l,x)$ be the multivalued function defined by
$$H(\l, x)= -(r_0)_{n-1} (x-\l)^{-n+1-r_0} (x-c_1)^{1-r_1} \cdots (x-c_n)^{1-r_n}$$
where $(\alpha)_n = \alpha(\alpha+1)\cdots (\alpha+n-1)$ denotes the Pochhammer symbol. 
Then we have 
$$L_\l\left(\dfrac1{(x-\l)^{r_0}(x-c_1)^{r_1} \cdots (x-c_n)^{r_n}}\right)= \dfrac{\partial H(\l,x)}{\partial x}. $$
\end{lem}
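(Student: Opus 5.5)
The plan is to verify the identity directly. Both sides share a common factor, and after removing it the claim becomes a polynomial identity in $x$. I will compare the two sides coefficient by coefficient after Taylor-expanding around $x=\l$.

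Write $P(x)=\prod_{j=1}^n(x-c_j)^{-r_j}$ and $u=(x-\l)^{-r_0}P(x)$. Since $P$ does not depend on $\l$, we have $\partial_\l^k u=(r_0)_k(x-\l)^{-r_0-k}P(x)$. Setting $p_0:=q_0$, this gives
\begin{equation*}
L_\l u=(x-\l)^{-n-r_0}P(x)\sum_{k=0}^n (r_0)_k\, p_{n-k}(\l)(x-\l)^{n-k}.
\end{equation*}
On the other side, note that $(x-c_1)^{1-r_1}\cdots(x-c_n)^{1-r_n}=q_0(x)P(x)$. Its $x$-derivative is $P(x)\bigl(q_0'(x)-q_1(x)\bigr)$, where $q_1$ is the polynomial of \eqref{Jordan-Pochhammer} evaluated at $x$. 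Hence
\begin{equation*}
\frac{\partial H}{\partial x}=-(r_0)_{n-1}(x-\l)^{-n-r_0}P(x)\Bigl[(1-n-r_0)q_0(x)+(x-\l)\bigl(q_0'(x)-q_1(x)\bigr)\Bigr].
\end{equation*}
After cancelling $(x-\l)^{-n-r_0}P(x)$, it remains to prove an identity between two polynomials in $x$ of degree at most $n$.

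To prove that identity, I will expand $q_0(x)$, $q_0'(x)$ and $q_1(x)$ in powers of $(x-\l)$ using their Taylor series at $\l$. These expansions are finite and exact, because $\deg q_0=n$ and $\deg q_1\le n-1$. I then compare the coefficients of $(x-\l)^m$, where $m=n-k$.

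On the left, the coefficient is
\begin{equation*}
(r_0)_k\Bigl[\tbinom{n+r_0-2}{m}q_0^{(m)}(\l)+\tbinom{n+r_0-2}{m-1}q_1^{(m-1)}(\l)\Bigr].
\end{equation*}
On the right, it is
\begin{equation*}
-(r_0)_{n-1}\Bigl[(1-n-r_0)\frac{q_0^{(m)}(\l)}{m!}+\frac{q_0^{(m)}(\l)-q_1^{(m-1)}(\l)}{(m-1)!}\Bigr].
\end{equation*}
Here the terms involving $m-1$ are absent when $m=0$.

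The coefficients are matched using the Pochhammer identity $(r_0)_{n-1}=(r_0)_k\,(r_0+k)\cdots(r_0+n-2)$, a product of $m-1$ factors. It gives $(r_0)_k\binom{n+r_0-2}{m-1}=(r_0)_{n-1}/(m-1)!$, which matches the $q_1$ terms. For the $q_0$ terms, the right side collects to $(r_0)_{n-1}(n+r_0-1-m)/m!=(r_0)_{n-1}(r_0+k-1)/m!$. This equals $(r_0)_k\binom{n+r_0-2}{m}$, because the numerator of that binomial coefficient is $(r_0+n-2)\cdots(r_0+k-1)$.

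The only delicate points are the boundary indices. For $k=n$ ($m=0$), the $q_1$ term is absent and both sides reduce to $(r_0)_n q_0(\l)$, using the convention $p_0=q_0$. For $k=n-1$ ($m=1$), the product of $m-1$ factors above is empty. I expect this bookkeeping of edge cases, together with confirming that the paper's $p_k$ correspond exactly to these binomial indices, to be the main thing to check. The rest is a formal computation.
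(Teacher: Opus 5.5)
Your proposal is correct and follows the paper's route: you cancel the common factor $(x-\lambda)^{-n-r_0}\prod_j(x-c_j)^{-r_j}$ to get a polynomial identity of degree at most $n$, then compare coefficients of $(x-\lambda)^m$. The paper does this by taking $\partial_x^k$ at $x=\lambda$, which is equivalent. Your explicit check of the Pochhammer/binomial identities, including the boundary cases $m=0$ and $m=1$, fills in the step the paper compresses into ``by comparing them.''
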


\begin{proof}
It suffices to show that 
\begin{equation} \label{eq1}
\begin{split}
&\sum_{k=0}^n
(r_0)_{n-k}
\left\{ 
 \begin{pmatrix}
n+r_0-2 \\
k
\end{pmatrix}
q_0^{(k)}(\l)+ 
\begin{pmatrix}
n + r_0 -2 \\
k-1
\end{pmatrix}
q_1^{(k-1)}(\l)
\right\}
 (x-\l)^k \\
 &\quad =(r_0)_{n-1}\left(\dfrac{n+r_0-1}{x-\l} - \sum_{j=1}^n
 \dfrac{1-r_j}{x-c_j}\right) (x-\l) (x-c_1) \cdots (x-c_n) 
\end{split}
\end{equation}
where we regard $\begin{pmatrix}
n + r_0 -2 \\
-1
\end{pmatrix}=0$.
Since both sides of \eqref{eq1} are polynomials in $x,\lambda$ of degree at most $n$, we can uniquely expand them in the form $\dsum_{k=0}^n \dfrac{h_k(\lambda)}{k!}(x-\lambda)^k$ where each $h_k(\lambda)$ is a polynomial in $\lambda$.
Since $h_k(\lambda) = \left.\dfrac{\partial^k }{\partial x^k}\left(\dsum_{k=0}^n h_k(\lambda)(x-\lambda)^k\right)\right|_{x=\lambda}$, we will compare the $k$-th partial derivative with respect to $x$ at $x=\lambda$ of the both sides of \eqref{eq1}.
We have
\begin{align*} 
\left.\dfrac{\partial^k}{\partial x^k} (\text{LHS of } \eqref{eq1}) \right|_{x=\l}
&=(r_0)_{n-k} \left\{ 
 \begin{pmatrix}
n+r_0-2 \\
k
\end{pmatrix}
q_0^{(k)}(\l)+ 
\begin{pmatrix}
n + r_0 -2 \\
k-1
\end{pmatrix}
q_1^{(k-1)}(\l)
\right\} k!. 
\end{align*}
On the other hand, since the right-hand side of \eqref{eq1} is 
$$
(r_0)_{n-1} \left\{(n+r_0-1) q_0(x) + (x-\l)q_1(x) - (x-\l)q_0^{(1)}(x) \right\},
$$
we have
\begin{align*}
\left.\dfrac{\partial^k}{\partial x^k} (\text{RHS of } \eqref{eq1}) \right|_{x=\lambda} =(r_0)_{n-1}\left\{(n+r_0-1)q_0^{(k)}(\l) + kq_1^{(k-1)}(\l) - kq_0^{(k)}(\l) \right\}.
\end{align*}
By comparing them, we have the lemma. 
\end{proof}

Using the Jordan-Pochhammer differential operator, we will find a Picard-Fuchs differential operator $\mathscr{D}$ with respect to $\omega$ as follows.

\begin{prop} \label{PF}
Let $\omega$ be the relative $2$-form in \eqref{omega}. 
Let $\mathscr{ D}_{\l_1}, \mathscr{D}_{\l_2} \colon \mathscr{O}_T^{\rm an} \to \mathscr{O}_T^{\rm an}$ be the differential operator defined by 
\begin{align*}
&\mathscr{D}_{\l_1}=q_0(\l_1) \dfrac{\partial^n}{\partial\l_1^n}+p_{1,1}(\l_1) \dfrac{\partial^{n-1}}{\partial\l_1^{n-1}}+p_{2,1}(\l_1) \dfrac{\partial^{n-2}}{\partial\l_1^{n-2}} + \cdots + p_{n,1}(\l_1) \text{ and} \\
&\mathscr{D}_{\l_2}=q_0(\l_2) \dfrac{\partial^n}{\partial\l_2^n}+p_{1,2}(\l_2) \dfrac{\partial^{n-1}}{\partial\l_2^{n-1}}+p_{2,2}(\l_2) \dfrac{\partial^{n-2}}{\partial\l_2^{n-2}}+\cdots + p_{n,2}(\l_2),
\end{align*}
where $p_{k,1}(\l)$ (resp. $p_{k,2}(\l)$) is a polynomial $p_k(\l)$  in \eqref{Jordan-Pochhammer} for $r_0=r_1=\cdots =r_n=A$ (resp. $N-A$). 
Let $\mathscr{D}=
\begin{pmatrix} 
\mathscr{D}_{\l_1} \\
\mathscr{D}_{\l_2} 
\end{pmatrix}
\colon \mathscr{O}_T^{\rm an} \to ( \mathscr{O}_T^{\rm an})^{\oplus 2}$. 
Then for any local section $f$ of $\mathcal{P}({\omega}) \subset \mathscr{O}^{\rm an}_T$, we have $\mathscr{D}(f)=0$.
In particular,  $\mathscr{D}$ induces the morphism $\mathscr{O}_T^{\mathrm{an}}/\mathcal{P}({\omega}) \to (\mathscr{O}^{\rm an}_T)^{\oplus 2}$. 
\end{prop}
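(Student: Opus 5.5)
Since $\omega$ is the descent of $pr_1^*\omega_1\wedge pr_2^*\omega_2$ from $\widetilde{C_1\times C_2}$, the plan is to reduce every period of $\omega$ to a product of periods of $\omega_1$ and $\omega_2$, and then apply Lemma \ref{potential} to each curve factor separately.

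\textbf{Step 1: reduction to $C_1\times C_2$.} Let $\Gamma_t$ be a local section of the period local system, i.e.\ a flat family of 2-cycles on $S_t$. Pull it back along the generically $N\!:\!1$ map $\widetilde{C_1\times C_2}\to S$ and push it down along the blow-up to $C_1\times C_2$. This gives a flat family of 2-cycles $\widetilde\Gamma_t$ on $(C_1)_{\l_1}\times(C_2)_{\l_2}$ with
\[
\int_{\Gamma_t}\omega=\frac1N\int_{\widetilde\Gamma_t}pr_1^*\omega_1\wedge pr_2^*\omega_2 .
\]
By the K\"unneth decomposition, $H_2((C_1\times C_2)_t,\Q)$ is spanned by products $\delta_1\times\delta_2$ with $\delta_i\in H_1((C_i)_{\l_i},\Q)$, together with the classes coming from $H_0\otimes H_2$ and $H_2\otimes H_0$. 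The 2-form $pr_1^*\omega_1\wedge pr_2^*\omega_2$ integrates to zero on the latter classes, because a holomorphic 1-form restricted to a point is zero. Hence every section of $\mathcal{P}(\omega)$ is locally a $\Q$-linear combination of products
\[
P_1(\l_1)\,P_2(\l_2),\qquad P_i(\l_i)=\int_{\delta_i}\omega_i ,
\]
where $\delta_i$ is a flat family of 1-cycles on $(C_i)_{\l_i}$.

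\textbf{Step 2: $\mathscr{D}_{\l_1}P_1=0$ and $\mathscr{D}_{\l_2}P_2=0$.} Since $\mathscr{D}_{\l_1}$ involves only $\partial/\partial\l_1$, it acts on $P_1(\l_1)P_2(\l_2)$ through the first factor alone, and similarly for $\mathscr{D}_{\l_2}$. So it suffices to show these two one-variable identities.

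On $C_1$ write
\[
\omega_1=\frac{dx}{(x-\l_1)^{A/N}(x-c_1)^{A/N}\cdots(x-c_n)^{A/N}} .
\]
I would take $r_0=\cdots=r_n=A/N$, which is what Lemma \ref{potential} requires for this integrand. One point to check is that this is consistent with the normalization in Proposition \ref{PF}, where the parameters are written $A$ and $N-A$; the exponents actually appearing are $A/N$ and $(N-A)/N$. Differentiating under the integral sign and applying Lemma \ref{potential} gives
\[
\mathscr{D}_{\l_1}P_1=\int_{\delta_1}\frac{\partial H}{\partial x}\,dx .
\]
Here $H(\l_1,x)\,y_1^{-1}$ is a single-valued multiple of a meromorphic function on the curve, namely the branch $(x-\l_1)^{-n+1}\prod_k (x-c_k)$ times the same $N$-th root character. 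So the integrand is an exact form on the curve minus its poles, and its integral over a closed cycle vanishes.

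\textbf{Main obstacle.} The hardest step is justifying that the integral of the exact form over $\delta_1$ is really zero. The function $H$ has a pole of order $n-1$ at $x=\l_1$ (in the local uniformizer) and behaves in some specific way at $x=\infty$, so $\delta_1$ must be chosen to avoid these points. This requires representing $H_1((C_1)_{\l_1})$ by cycles on the affine curve away from the ramification points, for example Pochhammer double loops. On such a loop the primitive $H$ returns to its initial value, because the monodromies of $(x-\l_1)^{-n+1-r_0}$ and $\prod_k(x-c_k)^{1-r_k}$ cancel around a Pochhammer commutator.

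I would also check, via \eqref{rangeassumption}, that the relevant cycles can be moved off $x=\l_1$ and $x=\infty$ continuously in $\l_1$. This ensures that differentiation under the integral sign is valid for a flat family of cycles. The same argument with $r_k=(N-A)/N$ handles $\mathscr{D}_{\l_2}P_2=0$. Finally, since the operators $\mathscr{D}_{\l_i}$ are $\C$-linear, they annihilate $\mathcal{P}(\omega)$, and therefore descend to a morphism $\mathscr{O}_T^{\mathrm{an}}/\mathcal{P}(\omega)\to(\mathscr{O}_T^{\mathrm{an}})^{\oplus2}$.
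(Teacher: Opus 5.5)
Your proposal is correct and is essentially the paper's proof: K\"unneth reduces $\mathcal{P}(\omega)$ to $\Q$-linear combinations of products of periods of $\omega_1$ and $\omega_2$, and Lemma~\ref{potential} then shows that $\mathscr{D}_{\lambda_i}\omega_i$ is exact. You are also right that the parameters must be $r_k=A/N$ (resp.\ $(N-A)/N$) rather than $A$ (resp.\ $N-A$); this is the normalization the paper itself uses in the proof of Theorem~\ref{main:1}.

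Your ``main obstacle'' is less serious than you suggest, because $H=-(A/N)_{n-1}\prod_k(x-c_k)/((x-\lambda_1)^{n-1}y_1)$ is already a single-valued meromorphic function on the curve. So $dH$ integrates to zero over any closed 1-cycle avoiding its poles, and neither Pochhammer loops nor \eqref{rangeassumption} is needed. Note also that the pole of $H$ at the point over $x=\lambda_1$ has order $N(n-1)+A$ in a local uniformizer, not $n-1$, though this does not affect the argument.
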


\begin{proof}
By Lemma \ref{potential}, $\mathscr{D}_{\l_1}(\omega_1)$ and $\mathscr{D}_{\l_2}(\omega_2)$ are exact forms on $C_1$ and $C_2$, i.e., these differential forms are equal to zero in the de Rham cohomology. 
Hence, for $i=1,2$, $\mathscr{D}_{\l_i}$ annihilates local sections of $\mathcal{P}(\omega_i)(\subset \mathscr{O}_T^{\mathrm{an}})$ generated by period functions with respect to $\omega_i$.

By the K\"unneth decomposition, as a subsheaf of $\mathscr{O}_T^{\mathrm{an}}$, we have 
\begin{equation*}
\mathcal{P}(\omega) = \mathcal{P}(\omega_1)\mathcal{P}(\omega_2)
\end{equation*}
where the right-hand side is the $\Q$-linear subsheaf generated by products of functions in $\mathcal{P}(\omega_1)$ and $\mathcal{P}(\omega_2)$.
Thus the proposition follows.
 
\end{proof}

\section{Proof of the main theorem} \label{mainproof}
In this section, we prove the following theorem.
\begin{thm}\label{mainthm}
For very general $t\in T$, higher Chow cycles $(\xi_{c_j}^{(i)})_t$ generate a rank $n\cdot\varphi(N)$ in $\ch^2(S_t,1)_{\mathrm{ind}}$.
In particular, we have
\begin{equation*}
\operatorname{rank}\ch^2(S_t,1)_{\mathrm{ind}} \ge n\cdot \varphi(N).
\end{equation*}
Here, $\varphi(N)$ denotes the Euler totient function.
\end{thm}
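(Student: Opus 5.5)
By Proposition \ref{transregprop} the transcendental regulator factors through $\ch^2(S_t,1)_{\mathrm{ind}}$. It therefore suffices to show that, for very general $t$, the values $\langle r((\xi_{c_j}^{(i)})_t),\omega_t\rangle \in \C/\mathcal{P}(\omega_t)$ span a $\Q$-subspace of dimension at least $n\cdot\varphi(N)$. Following \cite{NS}, I will work with families. Over a small simply connected open $U\subset T$, the cycles define normal functions $\nu_{\mathrm{tr}}(\xi_{c_j}^{(i)})\in\Gamma(U,\mathscr{O}_T^{\mathrm{an}}/\mathcal{P}(\omega))$, whose evaluation via \eqref{Qeval} recovers the pointwise regulator.

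\textbf{Step 1: the normal functions.} By Proposition \ref{regulator}, the differences $\nu(\xi_{c_j}^{(i+l)})-\nu(\xi_{c_1}^{(i)})$ are represented by $F^{(i)}_{j,l}=\int_{(K_j^l)^{(i)}}\omega-\int_{(K_j^l)^{(i+1)}}\omega$. Using \eqref{omegalocal} and the parametrization $\mu_j^l$, I write this as an iterated integral over the triangle:
\[
\int_{(K_j^l)^{(i)}}\omega=\zeta^{-i(N-A)}\int_{c_1}^{c_j}\!\!\int_{c_1}^{x}\frac{dy\,dx}{\sqrt[N]{f(x)}^{A}\,\sqrt[N]{g(y)}^{N-A}}
\]
along $\gamma_j^l$, with the chosen branches; here $v^{N-A}f(x)=f(x)^{A/N}g(y)^{(N-A)/N}$. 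The range assumption \eqref{rangeassumption} guarantees convergence at the endpoints $c_1,c_j$.

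\textbf{Step 2: apply $\mathscr{D}_{\l_1}$.} I differentiate under the integral in $\l_1$ (only the $x$-factor depends on $\l_1$) and apply Lemma \ref{potential} with $r_k=A/N$ suitably normalized. This turns the $x$-integrand into $\partial_x H(\l_1,x)$, so the inner $x$-integral collapses to boundary terms. The term at $x=c_j$ vanishes because of the range assumption. What survives is the diagonal boundary $x=y$, a one-variable integral
\[
\int_{c_1}^{c_j} H(\l_1,y)\,g(y)^{-(N-A)/N}\,dy.
\]
In this integrand the factors $(y-c_k)$ cancel up to integer powers, leaving essentially $(y-\l_1)^{-n+1-A/N}(y-\l_2)^{-(N-A)/N}$ times a polynomial.

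Expanding $(y-\l_2)$ around $\l_1$ in powers of $(y-\l_1)/(\l_2-\l_1)$ produces the finite sum with coefficients $(2-n)_k/((Nk+A)k!)$. This sum is an elementary antiderivative, so evaluating it between $c_1$ and $c_j$ gives the $c_j$-term minus the $c_1$-term in the displayed system from the introduction. The factor $(1-\zeta^A)\zeta^i$ comes from subtracting the $\sigma$-translate, and the same computation in $\l_2$ gives the second equation. Exactness of $\mathscr{D}$ on periods (Proposition \ref{PF}) makes $\mathscr{D}(\nu)$ well defined. This is the step I expect to be the main obstacle: controlling the boundary terms at $c_1,c_j$, the exceptional curves, and the branches precisely enough to pin down the constants.

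\textbf{Step 3: independence.} Write $G_j:=\mathscr{D}_{\l_1}\nu(\xi^{(0)}_{c_j})$. These are, up to a common factor, expressions in $((c_j-\l_2)/(c_j-\l_1))^{(Nk+A)/N}$. For distinct $j$ they have distinct branch loci $\l_1=c_j$, so they are linearly independent over $\C$ as functions. This gives $n$ independent $\C$-directions; if only differences were computed in Step 2, I would use the Galois conjugate description to fix normalizations.

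The dependence on $i$ enters through $\zeta^i$, so the $\Q$-span of $\{\zeta^i(1-\zeta^A)G_j\}$ is $\Q(\zeta)\cdot\langle G_j\rangle$, which has $\Q$-dimension $n\varphi(N)$ because $\gcd(A,N)=1$ makes $1-\zeta^A\neq0$. Since $\mathscr{D}$ kills $\mathcal{P}(\omega)$, any $\Q$-relation among the $\nu$ modulo periods would give a relation among the $\mathscr{D}\nu$, so the normal functions have rank at least $n\varphi(N)$ in $\Gamma(U,\mathscr{O}^{\mathrm{an}}/\mathcal{P}(\omega))$.

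Finally, the standard countability argument from \cite{NS} applies. There are countably many $\Q$-relations and countably many period sections, and each relation that holds at $t$ modulo $\mathcal{P}(\omega_t)$ holds on a proper analytic subset unless it holds identically. Hence for very general $t$ the evaluated values have the same rank, which proves Theorem \ref{mainthm}.
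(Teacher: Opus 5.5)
Your plan is the paper's: represent the differences $\langle\nu_{\mathrm{tr}}(\xi^{(i+l)}_{c_j}-\xi^{(i)}_{c_1}),\omega\rangle$ by integrals over $(K_j^l)^{(i)}$, move $\mathscr{D}_{\l_1}$ under the integral, use Lemma \ref{potential} and Stokes to reduce to the diagonal, and finish with $\Q(\zeta)$-linear algebra and Lemma \ref{basiclem}. Your branch-locus argument for $\C$-linear independence is a useful addition, since the paper only asserts it. However, two steps do not go through as written.

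\textbf{The finite sum in Step 2.} The $(y-c_k)$ factors cancel exactly, because $1-\frac AN-\frac{N-A}N=0$, so no polynomial is left. The diagonal integrand is then $(y-\l_1)^{-n+1-\frac AN}(y-\l_2)^{-1+\frac AN}\,dy$. Expanding $(y-\l_2)^{-1+\frac AN}$ in powers of $(y-\l_1)/(\l_2-\l_1)$ gives an infinite binomial series. That series need not converge along $\gamma_j^l$, and it does not produce the coefficients $(2-n)_k/((Nk+A)k!)$. What works is the change of variables $w=(y-\l_2)/(y-\l_1)$; the paper uses $u=w^{1/N}$, with branches fixed by conditions (iii) and (iv) on $\gamma_j^l$. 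Then $1-w=(\l_2-\l_1)/(y-\l_1)$, and the integrand becomes $(\l_2-\l_1)^{1-n}(1-w)^{n-2}w^{\frac AN-1}\,dw$. Now only the \emph{polynomial} $(1-w)^{n-2}$ is expanded (this uses $n\ge 2$), and each term integrates to $N w^{\frac{Nk+A}{N}}/(Nk+A)$. The endpoints $u=\sqrt[N]{c_1-\l_2}/\sqrt[N]{c_1-\l_1}$ and $u=\zeta^l\sqrt[N]{c_j-\l_2}/\sqrt[N]{c_j-\l_1}$ produce the factor $\zeta^{Al}$ in \eqref{eq11}.

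\textbf{Individual versus difference normal functions in Step 3.} Step 3 uses $G_j=\mathscr{D}_{\l_1}\langle\nu_{\mathrm{tr}}(\xi^{(0)}_{c_j}),\omega\rangle$ and the $\zeta^{Ai}$-dependence of individual normal functions. Step 2 only gives differences, namely $c\,\zeta^{Ai}(\zeta^{Al}P_j-P_1)$, where $c=N(1-\zeta^A)(\frac AN)_{n-1}(\l_2-\l_1)^{1-n}$ and $P_j=\sum_{k=0}^{n-2}\frac{(2-n)_k}{(Nk+A)k!}\bigl(\frac{c_j-\l_2}{c_j-\l_1}\bigr)^{\frac{Nk+A}{N}}$. ``The Galois conjugate description'' does not say how to isolate $P_j$.

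The input you are missing is the relation $\sum_{l\in\Z/N\Z}\xi^{(i+l)}_{c_j}=0$. It holds because $\prod_l\psi^{(i+l)}_{c_j}=\prod_l\varphi^{(i+l)}_{c_j}=1$ telescopically. Summing \eqref{eq11} over $l$ and using $\sum_l\zeta^{Al}=0$ (since $\gcd(A,N)=1$) gives $\mathscr{D}_{\l_1}\langle\nu_{\mathrm{tr}}(\xi^{(i)}_{c_1}),\omega\rangle=c\,\zeta^{Ai}P_1$. It then gives $c\,\zeta^{A(i+l)}P_j$ for $\xi^{(i+l)}_{c_j}$, which is Theorem \ref{main:1}.

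Alternatively, you can bypass individual cycles entirely. The $\Q$-span of the differences is stable under multiplication by $\zeta^A$ (shift $i$), so it is a $\Q(\zeta)$-space. It contains $c(\zeta^A-1)P_j$ and $c(P_j-P_1)$ for $j\ge 2$, so it equals $c\cdot\Q(\zeta)\langle P_1,\dots,P_n\rangle$. This already has $\Q$-dimension $n\varphi(N)$ once the $P_j$ are $\C$-linearly independent. With either repair your argument is complete.
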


The key point of the proof is to compute normal functions associated with the images of the families of cycles $\xi_{c_j}^{(i)}$ under the transcendental regulator map, and to prove their non-vanishingness by using the differential operator $\mathscr{D}$ in Section \ref{PFop}.

\subsection{Certain normal functions}
Let $X\to T$ be a smooth projective family of algebraic surfaces.
Let $\mathcal{Q}$ be the cokernel of the following map.
\begin{equation*}
R^{2}\pi_*\Q(2)_X\to {\mathscr Hom}(\pi_*\Omega^2_{X/T},\mathscr{O}_T^{\mathrm{an}});\quad\{\Gamma_t\}_{t\in U}\mapsto \left(\eta\mapsto \left(t\mapsto \int_{\Gamma_t}\eta_t\right)\right)
\end{equation*}
Note that $\mathcal{Q}$ is a quotient of a locally free sheaf by a local system of $\Q$-vector spaces.
For a relative 2-form $\eta\in \Gamma(X,\Omega_{X/T}^2)$, a map of $\Q$-linear sheaves
\begin{equation*}
\langle-,\eta\rangle\colon\mathcal{Q}\to \mathscr{O}_T^{\mathrm{an}}/\mathcal{P}(\eta).
\end{equation*}
is induced by a natural map ${\mathscr Hom}(\pi_*\Omega^2_{X/T},\mathscr{O}_T^{\mathrm{an}})\to \mathscr{O}_T^{\mathrm{an}}$.
Furthermore, for an open subset $U$ of $t\in U$, we have the evaluation map $\mathrm{ev}_t\colon \Gamma(U,\mathcal{Q})\to H^{2,0}(X_t)^\vee/H_2(X_t,\Q)$ which fits into the following diagram.
\begin{equation*}
\begin{tikzcd}
\Gamma(U,\mathcal{Q}) \arrow[d,"\langle -{,}\eta\rangle"'] \arrow[r,"\mathrm{ev}_t"]& H^{2,0}(X_t)^\vee/H_2(X_t,\Q) \arrow[d,"\langle -{,} \eta_t\rangle "] \\
\Gamma(U,\mathscr{O}_T^{\mathrm{an}}/\mathcal{P}(\eta)) \arrow[r,"\eqref{Qeval}"] & \C/\mathcal{P}(\eta_t) 
\end{tikzcd}
\end{equation*}
The following elementary lemma is important to us.
\begin{lem}[{\cite[Lemma 2.4]{NS}}] \label{basiclem}
For a non-zero element $\nu \in \Gamma(T,\mathcal{Q})$, we have $\mathrm{ev}_t(\nu)\neq 0$ for very general $t\in T$.
\end{lem}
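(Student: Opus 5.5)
The plan is to reduce the statement to the identity theorem for holomorphic functions, combined with the countability of $\Q$-local systems. Write $\mathcal{E}={\mathscr Hom}(\pi_*\Omega^2_{X/T},\mathscr{O}_T^{\mathrm{an}})$, which is locally free of finite rank. Write $\mathbb{L}=R^2\pi_*\Q(2)_X$, so that $\mathcal{Q}=\mathcal{E}/\mathrm{im}(\mathbb{L})$. I assume $T$ is connected (irreducible), as it is in our applications. Choose a countable cover of $T$ by coordinate balls $U_\alpha$. On each ball, $\mathbb{L}|_{U_\alpha}$ is constant with finite-dimensional stalk, so $\Gamma(U_\alpha,\mathbb{L})\simeq\Q^r$ is countable. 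Shrinking the balls if necessary, lift $\nu|_{U_\alpha}$ to a holomorphic section $s_\alpha\in\Gamma(U_\alpha,\mathcal{E})$.

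The first step is to identify the vanishing locus of $\mathrm{ev}_t(\nu)$ for $t\in U_\alpha$. By the commutative diagram defining $\mathrm{ev}_t$, the element $\mathrm{ev}_t(\nu)$ is the class of $s_\alpha(t)\in H^{2,0}(X_t)^\vee$ modulo the image of $H_2(X_t,\Q)$. Every class in $H_2(X_t,\Q)$ is the value at $t$ of a unique section $\gamma\in\Gamma(U_\alpha,\mathbb{L})$. Hence $\mathrm{ev}_t(\nu)=0$ if and only if $t\in Z_{\alpha,\gamma}:=\{t\in U_\alpha : s_\alpha(t)=\gamma(t)\}$ for some $\gamma$. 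Here $\gamma(t)$ denotes the image of $\gamma$ in $\mathcal{E}$ evaluated at $t$. Each $Z_{\alpha,\gamma}$ is a closed analytic subset of $U_\alpha$, being the zero locus of the holomorphic section $s_\alpha-\gamma$ of a vector bundle.

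The second step, which is the main point, is to show that if $\nu\neq0$ then no $Z_{\alpha,\gamma}$ equals $U_\alpha$. Suppose $Z_{\alpha,\gamma}=U_\alpha$ for some $\alpha$ and $\gamma$. Then $\nu$ vanishes on the nonempty open set $U_\alpha$. Let $\Omega\subset T$ be the interior of the set of points near which $\nu$ vanishes; $\Omega$ is nonempty and open. I claim $\Omega$ is closed. Let $t_0\in\overline{\Omega}$, take a ball $U_0\ni t_0$, and lift $\nu$ to $s_0$ on $U_0$. On a small ball $B\subset U_0\cap\Omega$ we have $s_0|_B=\gamma_B$ for some $\gamma_B\in\Gamma(B,\mathbb{L})$. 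Since $U_0$ is simply connected, $\gamma_B$ extends to $\gamma_0\in\Gamma(U_0,\mathbb{L})$. Then $s_0-\gamma_0$ is a holomorphic section of $\mathcal{E}$ on the connected set $U_0$ that vanishes on the open set $B$. By the identity theorem it vanishes on all of $U_0$, so $t_0\in\Omega$. Connectedness of $T$ then gives $\Omega=T$, that is, $\nu=0$, a contradiction.

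Consequently, for $\nu\neq0$ the set $\{t\in T:\mathrm{ev}_t(\nu)=0\}$ is contained in $\bigcup_{\alpha,\gamma}Z_{\alpha,\gamma}$. This is a countable union of proper closed analytic subsets, each nowhere dense. So $\mathrm{ev}_t(\nu)\neq0$ for $t$ outside this countable union, i.e., for very general $t$. The only delicate point is the propagation argument in the second step: one must use local constancy of $\mathbb{L}$ on simply connected balls to extend $\gamma_B$. Once that is in place, everything else is formal.
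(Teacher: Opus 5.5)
Your argument is correct and is essentially the standard proof behind the cited Lemma 2.4 of [NS]; the present paper gives no proof of its own. You lift $\nu$ locally to a holomorphic section of the dual Hodge bundle, write the vanishing locus of $\mathrm{ev}_t(\nu)$ on a ball as the countable union of the analytic sets $\{s_\alpha=\gamma\}$ over flat $\Q$-sections $\gamma$, and use the identity theorem together with connectedness of $T$ to show each of these sets is proper. Your explicit connectedness assumption and your reading of ``very general'' in the analytic sense (outside a countable union of nowhere dense analytic subsets) are exactly what the statement needs: it fails for disconnected $T$, and it would fail in the Zariski sense for an arbitrary section of $\mathcal{Q}$.
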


Suppose that we have irreducible divisors $C_j$ on $X$ which are smooth over $T$ and non-zero rational functions $f_j$ on $C_j$ whose zeros and poles are also smooth over $T$.
Assume that they satisfy the condition $\sum_j\mathrm{div}_{C_j}(f_j) = 0$.
Then we have a family of higher Chow cycles $\xi = \{\xi_t\}_{t\in S}$ such that $\xi_t\in \mathrm{CH}^2(X_t,1)$ is represented by the formal sum $\sum_{j}((C_j)_t,(f_j)_t)$.
In this situation, there exists an element $\nu_{\mathrm{tr}}(\xi)\in \Gamma(T,\mathcal{Q})$ such that 
\begin{equation}\label{existenceNF}
\mathrm{ev}_t(\nu_{\mathrm{tr}}(\xi)) = r(\xi_t)
\end{equation}
where $r\colon \ch^2(X_t,1)\to H^{2,0}(X_t)/H_2(X,\Q)$ is the transcendental regulator map (see Section \ref{computeReg} or \cite[Section 2.3]{NS} for detailed arguments).
The element $\nu_{\mathrm{tr}}(\xi)$ can be regarded as a certain type of normal function for a family of cycles $\xi$.
In this paper, we explicitly compute $\nu_{\mathrm{tr}}(\xi_{c_j}^{(i)})$ satisfying condition \eqref{existenceNF}.

\subsection{Computation of normal functions}
Let $t\in T$, $j\in \{2,3,\dots,n\}$ and $l\in \Z/N\Z$. 
We take a path $\gamma_j^l$ on $\C$ from $c_1$ to $c_j$ satisfying conditions \ref{i}-\ref{logl22} in Section \ref{computeReg}.
There exists an open neighborhood $U$ of $t$ such that $\gamma_j^l$ satisfies conditions \ref{i}-\ref{logl22} for any $t'\in U$.
Using the path $\gamma_j^l$, we can construct a 2-chain $(K_j^l)^{(i)}$ on $S_{t'}$ for each $t'\in U$.
Since $\gamma_j^l$ is common in $U$, the function
\begin{equation*}
U\ni t' \mapsto \int_{(K_j^l)^{(i)}}\eta_{t'} = \int_\triangle (\sigma^i\circ\mu_j^l)^*\eta_{t'}
\end{equation*}
defines a holomorphic function on $U$ for each relative 2-form $\eta$.
Thus, the $\mathscr{O}_T^{\mathrm{an}}$-linear map
\begin{equation}\label{locl}
\eta \mapsto \left(t'\mapsto \int_{(K_j^l)^{(i)}}\eta_{t'}-\int_{(K_j^l)^{(i+1)}}\eta_{t'}\right)
\end{equation}
defines a section $\nu_U\in \Gamma(U,\mathcal{Q})$.
By Proposition \ref{regulator}, we have
\begin{equation*}
\mathrm{ev}_{t'}(\nu_U) = r((\xi_{c_j}^{(i+l)})_{t'}-(\xi_{c_1}^{(i)})_{t'}) =\mathrm{ev}_{t'}\left(\nu_{\mathrm{tr}}(\xi_{c_j}^{(i+l)}-\xi_{c_1}^{(i)})\right)
\end{equation*}
for all $t'\in U$.
Then by Lemma \ref{basiclem}, we have $\nu_U = \nu_{\mathrm{tr}}(\xi_{c_j}^{(i+l)}-\xi_{c_1}^{(i)})|_U$.
Thus, \eqref{locl} is the local description of $\nu_{\mathrm{tr}}(\xi_{c_j}^{(i)}-\xi_{c_1}^{(i)})$.

We can compute $\nu_{\mathrm{tr}}(\xi_{c_j}^{(i)})$ by \eqref{locl}.
To prove their non-triviality, we consider the pairing with the relative 2-form $\omega$ in Section \ref{omegasection}.
By Proposition \ref{PF}, the differential operator $\mathscr{D}\colon\mathscr{O}_T^{\mathrm{an}}\to (\mathscr{O}_T^{\mathrm{an}})^{\oplus 2}$ annihilates all local sections of $\mathcal{P}(\omega)$.
Thus, $\mathscr{D}$ induces the $\Q$-linear map
\begin{equation}\label{DQmap}
\mathscr{D}\colon \mathscr{O}_T^{\mathrm{an}}/\mathcal{P}(\omega) \to (\mathscr{O}_T^{\mathrm{an}})^{\oplus 2}
\end{equation}
We will compute the explicit images of $\langle \nu_{\mathrm{tr}}(\xi_{c_j}^{(i+l)}-\xi_{c_1}^{(i)}),\omega \rangle$ under the map \eqref{DQmap}.

\begin{lem}\label{covering}
Let $t\in T$ and $(K_j^l)^{(i)}$ be topological 2-chains in Section \ref{consttop}.
For $i \in \Z/N\Z$, we have 
\begin{equation*}
\int_{(K_j^l)^{(i)}}\omega_t = \zeta^{Ai}\int_{(K_j^l)^{(0)}} \omega_t.
\end{equation*}
\end{lem}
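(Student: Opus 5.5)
By definition, $(K_j^l)^{(i)}$ is the closure of $(\sigma^i\circ\mu_j^l)(\triangle)$. Up to a set of measure zero (the boundary of $\triangle$), it is the image of $\triangle$ under $\sigma^i\circ\mu_j^l$, with the orientation coming from $\triangle$. I will therefore rewrite both integrals as integrals over $\triangle$:
\begin{equation*}
\int_{(K_j^l)^{(i)}}\omega_t=\int_\triangle (\mu_j^l)^*(\sigma^i)^*\omega_t, \qquad \int_{(K_j^l)^{(0)}}\omega_t=\int_\triangle (\mu_j^l)^*\omega_t .
\end{equation*}
The lemma then follows once I show the pointwise identity $(\sigma^i)^*\omega_t=\zeta^{Ai}\omega_t$ on $S_t$. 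It is enough to check this on the open chart $V$, because $\mu_j^l(\triangle)\subset V$ and both sides are holomorphic 2-forms.

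To check the identity, I use the local expression \eqref{omegalocal}, $\omega=dx\wedge dy/(v^{N-A}f(x))$ on $V$. The automorphism $\sigma^i$ is $(x,y,v)\mapsto(x,y,\zeta^i v)$, so it fixes $x$, $y$ and $f(x)$. Hence
\begin{equation*}
(\sigma^i)^*\omega=\frac{dx\wedge dy}{\zeta^{i(N-A)}v^{N-A}f(x)}=\zeta^{-i(N-A)}\omega=\zeta^{Ai}\omega,
\end{equation*}
where the last step uses $\zeta^N=1$. The same computation in the chart $W$, with $w\mapsto\zeta^{-i}w$ and $\omega=dx\wedge dy/(w^Ag(y))$, gives the same factor $\zeta^{Ai}$. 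This shows $\sigma^i$ acts on $\omega$ globally by this character, although only the chart $V$ is needed.

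Substituting the identity into the integral over $\triangle$ and pulling out the constant $\zeta^{Ai}$ gives the claim. The integrals converge because $\omega_t$ is a smooth form on the compact surface $S_t$ and $\triangle$ has compact closure whose image is the compact set $(K_j^l)^{(0)}$; pulling back along $\mu_j^l$ causes no convergence issue.

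The one point that needs care is whether the orientation, and the identification of $(K_j^l)^{(i)}$ with $\sigma^i_*(K_j^l)^{(0)}$ as chains, are compatible. Both chains are parametrized by the same $\triangle$, with $(K_j^l)^{(i)}$ parametrized via $\sigma^i\circ\mu_j^l$. So $(K_j^l)^{(i)}=\sigma^i_*(K_j^l)^{(0)}$ as oriented chains, and no sign appears. There is no real obstacle beyond this; the whole proof is the character computation above.
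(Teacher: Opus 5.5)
Your proof is correct and is essentially the paper's argument: both use $(K_j^l)^{(i)}=\sigma^i\bigl((K_j^l)^{(0)}\bigr)$, which comes from the common parametrization by $\triangle$, together with $(\sigma^i)^*\omega=\zeta^{-i(N-A)}\omega=\zeta^{Ai}\omega$ from the local expression \eqref{omegalocal} on $V$, and then change variables. Your convergence remark is not needed for the lemma, since the two integrands over $\triangle$ differ pointwise by the constant $\zeta^{Ai}$.
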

\begin{proof}
The topological 2-chain $(K_j^l)^{(i)}$ satisfies $\sigma^i((K_{j}^l)^{(0)}) = (K_j^l)^{(i)}$, where $\sigma^i\in\Aut(S_t)$ is the automorphism defined by $(x,y,v)\mapsto (x,y,\zeta^iv)$.
By the local description of $\omega$ in \eqref{omegalocal}, we have $(\sigma^i)^*\omega =\zeta^{Ai}\omega$.
Thus, the equality follows as
\begin{equation*}
\int_{(K_j^l)^{(i)}}\omega_t = \int_{\sigma^i((K_{j}^l)^{(0)})}\omega_t = \int_{(K_{j}^l)^{(0)}}(\sigma^i)^*\omega_t = \zeta^{Ai}\int_{(K_j^l)^{(0)}} \omega_t.
\end{equation*}
\end{proof}

Using Lemma \ref{covering}, $\langle \nu_{\mathrm{tr}}(\xi_{c_j}^{(i+l)}-\xi_{c_1}^{(i)}),\omega \rangle \in \Gamma(T,\mathscr{O}_T^{\mathrm{an}}/\mathcal{P}(\omega))$ is represented by the local holomorphic function 
\begin{equation}\label{afterpair}
t\mapsto \zeta^{Ai}(1-\zeta^A)\int_{(K_j^l)^{(0)}}\omega_t.
\end{equation}
Using this integral expression, we compute $\mathscr{D}(\langle \nu_{\mathrm{tr}}(\xi_{c_j}^{(i)}),\omega \rangle)$ as follows.

\begin{thm} \label{main:1}
For $i \in \Z/N\Z$, the images of $\nu_{\rm tr}(\xi_{c_j}^{(i)})$ $(j=1, \ldots, n)$ under the Picard-Fuchs differential operator $\mathscr{D}$ are as follows: 
\begin{align*}
&\mathscr{D}(\nu_{\rm tr}(\langle \xi_{c_j}^{(i)}), \omega \rangle)= 
\frac{N(1-\zeta^A)\zeta^{Ai}}{(\l_2-\l_1)^{n-1}}
\sum_{k=0}^{n-2}
\dfrac{(2-n)_k}{k!}
\begin{pmatrix}
  \dfrac{\left(\frac{A}{N} \right)_{n-1}}{(Nk+A)} \cdot
 \frac{(c_j-\l_2)^{\frac{Nk+A}{N}}}{(c_j-\l_1)^{\frac{Nk+A}{N}}}\\
- 
  \dfrac{\left(\frac{N-A}N\right)_{n-1}}{(Nk+N-A)} \cdot \frac{(c_j-\l_1)^{\frac{Nk+N-A}{N}}}{(c_j-\l_2)^{\frac{Nk+N-A}{N}}}
\end{pmatrix}. 
\end{align*}
\end{thm}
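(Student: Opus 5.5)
We will work with the differences $\xi_{c_j}^{(i+l)}-\xi_{c_1}^{(i)}$, since those are the ones with an explicit integral representation. By \eqref{afterpair}, the pairing $\langle \nu_{\mathrm{tr}}(\xi_{c_j}^{(i+l)}-\xi_{c_1}^{(i)}),\omega\rangle$ is represented locally by $\zeta^{Ai}(1-\zeta^A)I(\l_1,\l_2)$, where
\begin{equation*}
I(\l_1,\l_2)=\int_{(K_j^l)^{(0)}}\omega=\int_{0<s_2\le s_1<1}\frac{\sqrt[N]{f(x)}^{\,N-A}}{\sqrt[N]{g(y)}^{\,N-A}}\cdot\frac{dx\wedge dy}{f(x)}, \qquad x=\gamma_j^l(s_1),\ y=\gamma_j^l(s_2).
\end{equation*}
On the chosen branches this integrand is $(x-\l_1)^{-A/N}\prod_k(x-c_k)^{-A/N}\,(y-\l_2)^{-(N-A)/N}\prod_k (y-c_k)^{-(N-A)/N}\,dx\,dy$, i.e.\ the product of the $\omega_1$-integrand in $x$ and the $\omega_2$-integrand in $y$, integrated over the region $\{y \text{ before } x \text{ along } \gamma_j^l\}$.

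To apply $\mathscr{D}_{\l_1}$, we differentiate under the integral sign; this is legitimate because the path avoids $\l_1,\l_2$, and the exponents lie in $(-1,0)$, so the endpoint singularities at $c_1,c_j$ are integrable. Only the $x$-factor depends on $\l_1$. By Lemma \ref{potential} with $r_0=\dots=r_n=A/N$, we get $\mathscr{D}_{\l_1}$ of the $x$-integrand equal to $\partial_x H_1(\l_1,x)$, where
\begin{equation*}
H_1(\l_1,x)=-(A/N)_{n-1}(x-\l_1)^{-n+1-A/N}\prod_k (x-c_k)^{1-A/N}.
\end{equation*}
(There is a normalization point to check here: the Proposition \ref{PF} operator is stated with $r_i=A$, but the exponents appearing are $A/N$. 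We will use the version consistent with Lemma \ref{potential} for the actual exponents $A/N$, and likewise $(N-A)/N$ for $\l_2$.)

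Next we integrate in $x$ over $s_1\in[s_2,1)$. This gives the boundary term $H_1(\l_1,c_j)-H_1(\l_1,y)$. The value at $x=c_j$ vanishes because $1-A/N>0$, which leaves
\begin{equation*}
\mathscr{D}_{\l_1}I=(A/N)_{n-1}\int_{\gamma_j^l}(y-\l_1)^{-n+1-A/N}(y-\l_2)^{-(N-A)/N}\prod_k(y-c_k)^{1-A/N-(N-A)/N}\,dy .
\end{equation*}
The product over $k$ has exponent $0$, so it drops out. The resulting one-variable integral is elementary. We substitute $u=(y-\l_2)/(y-\l_1)$, so that $du=(\l_2-\l_1)(y-\l_1)^{-2}dy$, and the integrand becomes
\begin{equation*}
(\l_2-\l_1)^{-(n-1)}\,u^{A/N-1}(1-u)^{n-2}\,du .
\end{equation*}
Expanding $(1-u)^{n-2}=\sum_k (2-n)_k u^k/k!$ and integrating term by term gives $\sum_k \frac{(2-n)_k}{k!}\frac{N}{Nk+A}\,u^{(Nk+A)/N}$, evaluated between the endpoint values $u(c_1)$ and $u(c_j)$. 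By conditions \ref{l1log} and \ref{logl22}, the branch of $u^{(Nk+A)/N}$ at $c_j$ is $\zeta^{lA}$ (up to the appropriate power convention) times $(c_j-\l_2)^{(Nk+A)/N}/(c_j-\l_1)^{(Nk+A)/N}$ on the chosen roots. At $c_1$ it is the analogous expression with $c_1$ and no twist. Multiplying by the prefactor $\zeta^{Ai}(1-\zeta^A)$, we get exactly the first component of the claimed formula for $\xi_{c_j}^{(i+l)}$ minus that for $\xi_{c_1}^{(i)}$.

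The $\l_2$-component is handled symmetrically. We apply Lemma \ref{potential} with exponents $(N-A)/N$ to the $y$-factor and integrate in $y$ over $[0,s_1]$. The boundary at $c_1$ vanishes, and the boundary at $y=x$ enters with the opposite sign; this sign is the source of the minus sign in the second component. The roles of $\l_1,\l_2$ are then swapped in the substitution.

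Finally, since the formula is additive in the cycle, the statement for an individual $\xi_{c_j}^{(i)}$ follows by fixing $\xi_{c_1}^{(0)}$ as a reference. The identity holds for all differences, and the stated expression is consistent with the differences (the $\zeta^{Ai}$ factors match after choosing $l$), so each $\nu_{\mathrm{tr}}(\xi_{c_j}^{(i)})$ is determined by this formula.

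The main obstacle is bookkeeping rather than analysis. We must track branches and roots of unity exactly, so that the endpoint value of $u^{(Nk+A)/N}$ produces $\zeta^{(i+l)A}$ and not some other power, and we must justify that the boundary terms from $\triangle$ vanish at the singular corners using the range assumption \eqref{rangeassumption}.
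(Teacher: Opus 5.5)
\textbf{The gap is in your final step.} Your computation for the differences $\xi^{(i+l)}_{c_j}-\xi^{(i)}_{c_1}$ follows the paper's route: differentiate under the integral, apply Lemma \ref{potential} with exponents $A/N$, integrate out $s_1$ so that only the diagonal survives, then make an elementary substitution (your $u=(y-\l_2)/(y-\l_1)$ is just the $N$th power of the paper's $u$). You are also right that the exponents in Proposition \ref{PF} should be $A/N$ and $(N-A)/N$, not $A$ and $N-A$.

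The problem is the passage from differences to individual cycles. Write $F^{(i)}_j:=\mathscr{D}(\langle\nu_{\rm tr}(\xi^{(i)}_{c_j}),\omega\rangle)$. The difference formulas for all $i,l$ and $j\ge 2$ determine the $F^{(i)}_j$ only up to one common additive function $G$, independent of $i$ and $j$. The displayed expression plus any $G$ is equally ``consistent with the differences.'' Taking $\xi^{(0)}_{c_1}$ as a reference only moves the problem to the unknown $F^{(0)}_1$, which you never compute.

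\textbf{The missing input is a relation among the cycles.} Since $\prod_{l\in\Z/N\Z}\psi^{(i+l)}_{c_j}=1$ and $\prod_{l\in\Z/N\Z}\varphi^{(i+l)}_{c_j}=1$ (both products telescope), one has $\sum_{l}\xi^{(i+l)}_{c_j}=0$, hence $\sum_l F^{(i+l)}_j=0$. Now sum your difference formula over $l\in\Z/N\Z$. The left side becomes $-NF^{(i)}_1$. On the right, the $c_j$-terms carry the factor $\sum_l\zeta^{Al}$, which vanishes exactly because $\gcd(N,A)=1$. This gives $F^{(i)}_1$, and the difference formula then gives every $F^{(i+l)}_j$. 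Your proposal never uses $\gcd(N,A)=1$, which is a symptom of this omission.

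\textbf{A side remark on the second component.} If you actually carry out the $\l_1\leftrightarrow\l_2$ swap, the substitution produces the prefactor $(\l_1-\l_2)^{-(n-1)}$. Combined with the boundary sign, this gives $-(\l_1-\l_2)^{-(n-1)}=(-1)^n(\l_2-\l_1)^{-(n-1)}$. So the displayed minus sign is reproduced only for odd $n$, and your claim that the boundary sign alone accounts for it is not accurate. The paper itself only says this component is ``similar,'' and the sign plays no role in the rank argument.
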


\begin{proof}
It is enough to calculate the image of the local holomorphic function in \eqref{afterpair} under the differential operator $\mathscr{D}$.
Note that the image is a pair of global holomorphic functions, though \eqref{afterpair} itself is a multivalued function.
We only compute the first component, because the computation on the second component is similar.

First, we show 
\begin{align} \label{eq11}
\begin{split}
&\mathscr{D}_{\l_1}(\langle \nu_{\rm tr}(\xi_{c_j}^{(i+l)} - \xi_{c_1}^{(i)}), \omega \rangle) \\
&=
\frac{N(1-\zeta^A)\zeta^{Ai}}{(\l_2-\l_1)^{n-1}} \left(\dfrac{A}N\right)_{n-1}
\displaystyle  \sum_{k=0}^{n-2}  \dfrac{(2-n)_k}{(Nk+A)k!} \cdot  
\left(
\zeta^{Al} \frac{(c_j-\l_2)^{\frac{Nk+A}{N}}}{(c_j-\l_1)^{\frac{Nk+A}{N}}}- \frac{(c_1-\l_2)^{\frac{Nk+A}{N}}}{(c_1-\l_1)^{\frac{Nk+A}{N}}}
\right). 
\end{split}
\end{align}
By the local description of $\omega_t$, we have
\begin{equation*}
\mathscr{D}_{\l_1} \left(\int_{(K_j^l)^{(0)}} \omega_t\right) = 
\mathscr{D}_{\l_1}\left(\int_{\triangle}(\mu_j^l)^*\left(\dfrac{dx\wedge dy}{v^{N-A}f(x)}\right)\right)
=\mathscr{D}_{\l_1} \left(\int_{\triangle} \dfrac{(\gamma_j^l(s_1))'(\gamma_j^l(s_2))'ds_1\wedge ds_2}{\sqrt[N]{f(\gamma_j^l(s_1))}^A \sqrt[N]{g(\gamma_j^l(s_2))}^{N-A}}\right)
\end{equation*}
where branches of functions $\sqrt[N]{f(\gamma_j^l(s_1))}$ and $\sqrt[N]{g(\gamma_j^l(s_2))}$ are determined as in \eqref{branchsqrt}.

For $\varepsilon>0$, let $\triangle_\varepsilon=\{(s_1,s_2)\in \R^2\mid \varepsilon \le s_2\le s_1\leq 1-\varepsilon\}$.
Then, by standard analytic arguments using Lebesgue's dominated convergence theorem, we have
\begin{align*}
=\mathscr{D}_{\l_1} \left(\lim_{\varepsilon\rightarrow 0}\int_{\triangle_{\varepsilon}} \dfrac{(\gamma_j^l(s_1))'(\gamma^l_j(s_2))'ds_1\wedge ds_2}{\sqrt[N]{f(\gamma_j^l(s_1))}^A \sqrt[N]{g(\gamma_j^l(s_2))}^{N-A}}\right)= \lim_{\varepsilon\rightarrow 0} \int_{\triangle_\varepsilon } \mathscr{D}_{\l_1}\left(\dfrac{(\gamma_j^l(s_1))'(\gamma_j^l(s_2))'ds_1\wedge ds_2}{\sqrt[N]{f(\gamma_j^l(s_1))}^A \sqrt[N]{g(\gamma_j^l(s_2))}^{N-A}}\right).
\end{align*}
Let $H(\l_1,x) = -\left(\frac{A}{N}\right)_{n-1}(x-\l_1)^{-n+1-\frac{A}{N}}(x-c_1)^{1-\frac{A}{N}}\cdots (x-c_n)^{1-\frac{A}{N}}$ be the multivalued function defined in Lemma \ref{potential}.
Since the branches of functions $\sqrt[N]{x-\l_1}$ and $\sqrt[N]{x-c_i}$ is already chosen along $\gamma^l_j$, the branch of $H(\l_1,x)$ along $\gamma^l_j$ is already fixed.
Since we have $\mathscr{D}_{\l_1}\left(1/\sqrt[N]{f(x)}^{A}\right) = \frac{\partial}{\partial x}H(x,\l_1)$ by Lemma \ref{potential}, we have
\begin{equation*}
=\lim_{\varepsilon\rightarrow 0} \int_{\triangle_\varepsilon } 
\dfrac{\partial}{\partial s_1}\left(\dfrac{H(\l_1,\gamma_j^l(s_1))\cdot(\gamma_j^l(s_2))'}{\sqrt[N]{g(\gamma_j^l(s_2))}^{N-A}}\right)ds_1\wedge ds_2 = 
\lim_{\varepsilon\rightarrow 0} \int_{\triangle_\varepsilon }d\left(\dfrac{H(\l_1,\gamma_j^l(s_1))\cdot(\gamma^l_j(s_2))'ds_2}{\sqrt[N]{g(\gamma_j^l(s_2))}^{N-A}}\right)
\end{equation*}
Then we use the Stokes theorem.
Note that integration along $\partial\triangle_\varepsilon$ converges to $0$ except the diagonal line $s_1=s_2$, so we have
\begin{align*}
&=\lim_{\varepsilon\rightarrow 0} \int_{\partial\triangle_\epsilon }\dfrac{H(\l_1,\gamma_j^l(s_1))\cdot(\gamma^l_j(s_2))'ds_2}{\sqrt[N]{g(\gamma_j^l(s_2))}^{N-A}} = \int_{s = 1}^{s=0}\dfrac{H(\l_1,\gamma_j^l(s))\cdot(\gamma^l_j(s))'ds}{\sqrt[N]{g(\gamma_j^l(s))}^{N-A}} \\
&= \left(\frac{A}{N}\right)_{n-1}\int_{s=0}^{s=1}\dfrac{(\gamma^l_j(s))'ds}{(\gamma_j^l(s)-\l_1)^{n-1}\cdot \sqrt[N]{\gamma_j^l(s)-\l_1}^A\cdot \sqrt[N]{\gamma_j^l(s)-\l_2}^{N-A}}
\end{align*}
Let $u$ be the new path on $\C$ defined by $u(s)=\frac{\sqrt[N]{\gamma_j^l(s)-\l_2}}{\sqrt[N]{\gamma_j^l(s)-\l_1}}$.
By the condition \ref{l1log} and condition \ref{logl22} on $\gamma_j^l$, $u$ is the path from $\frac{\sqrt[N]{c_1-\l_2}}{\sqrt[N]{c_1-\l_1}}$ to $\zeta^l\frac{\sqrt[N]{c_j-\l_2}}{\sqrt[N]{c_j-\l_1}}$.
Then we have
\begin{align*}
&=\frac{-N}{(\l_1-\l_2)^{n-1}} \left(\dfrac{A}N\right)_{n-1} \int_{s=0}^{s=1} (u(s)^N-1)^{n-2}u(s)^{A-1}u'(s)ds \\
&=\frac{-N}{(\l_1-\l_2)^{n-1}} \left(\dfrac{A}N\right)_{n-1}\int_{\frac{\sqrt[N]{c_1-\l_2}}{\sqrt[N]{c_1-\l_1}}}^{\zeta^l\frac{\sqrt[N]{c_j-\l_2}}{\sqrt[N]{c_j-\l_1}}} (u^N-1)^{n-2}u^{A-1}du \\
&= \frac{N}{(\l_2-\l_1)^{n-1}} \left(\dfrac{A}N\right)_{n-1}
 \sum_{k=0}^{n-2}  \dfrac{(2-n)_k}{(Nk+A)k!} 
  \left(\zeta^{Al} \frac{(c_j-\l_2)^{\frac{Nk+A}{N}}}{(c_j-\l_1)^{\frac{Nk+A}{N}}}- \frac{(c_1-\l_2)^{\frac{Nk+A}{N}}}{(c_1-\l_1)^{\frac{Nk+A}{N}}}\right). 
\end{align*}
We use the binomial expansion and the relation $\binom{a}{k}=\frac{(-1)^k(-a)_k}{k!}$ in the last equality.
Thus, we have \eqref{eq11}. 

Secondly, we will prove the theorem. 
By the definition of $\xi_{c_j}^{(i)}$, we have $\sum_{l\in \Z/N\Z}\xi_{c_j}^{(i+l)}=0$.
Thus, we have
\begin{equation*}
\sum_{l\in\Z/N\Z}\mathscr{D}(\langle \nu_{\rm tr}(\xi_{c_j}^{(i+l)} - \xi_{c_1}^{(i)}), \omega \rangle)=-N\mathscr{D}(\langle
\nu_{\rm tr}(\xi_{c_1}^{(i)}), \omega\rangle).
\end{equation*}
Since $\gcd(N, A)=1$, the statement follows from \eqref{eq11}.   
\end{proof}

\subsection{Proof of Theorem \ref{mainthm}}

Let $\Xi$ be the subgroup of $\Gamma(T,\mathcal{Q})$ generated by
\begin{equation*}
\nu_{\mathrm{tr}}(\xi_{c_j}^{(i)})\quad (i\in \Z/N\Z,\ j=1,2,\dots,n).
\end{equation*}
First, we compute the rank of $\Xi$.
By Theorem \ref{main:1}, the rational functions $\mathscr{D}(\nu_{\rm tr}(\langle \xi_{c_j}^{(0)}), \omega \rangle)$ $(j=1, \ldots, n)$ are linearly independent over $\C$, in particular, linearly independent over $\Q(\zeta)$.
Furthermore, since $\mathscr{D}(\nu_{\rm tr}(\langle \xi_{c_j}^{(0)}), \omega \rangle)$ is non-torsion for each $j=1,2,\dots, n$, $\mathscr{D}(\nu_{\rm tr}(\langle \xi_{c_j}^{(i)}), \omega \rangle)\quad (i\in \Z/N\Z)$ generates a free $\Z[\zeta]$-module of rank 1, whose rank as an abelian group is equal to $\varphi(N)$. 
Thus, the rank of $\Xi$ is equal or greater than $n\cdot \varphi(N)$.

By Lemma \ref{basiclem}, we have $\operatorname{rank} \mathrm{ev}_t(\Xi) = \operatorname{rank} \Xi$ for very general $t\in T$.
Furthermore, by \eqref{existenceNF}, we have
\begin{equation*}
\mathrm{ev}_t(\Xi) = \langle r((\xi_{c_j}^{(i)})_t)\mid i\in \Z/N\Z,\ j=1,2,\dots, n\rangle.
\end{equation*}
Therefore, by Proposition \ref{transregprop}, the cycles $(\xi_{c_j}^{(i)})_t$ generate a subgroup of rank at least $n \cdot \varphi(N)$ in $\ch^2(S_t,1)_{\mathrm{ind}}$.

\section*{Acknowledgment}
The first author is supported by Waseda University Grant for Early Career Researchers (Project number: 2025E-041). 
The second author is supported by JSPS KAKENHI 21H00971.

\end{document}